\def\Cl{\operatorname{Cl}}
\def\R{\mathbb{R}}
\def\NN{\mathbb{N}}
\newtheorem{thm}{Theorem}[section]
\newtheorem{cor}[thm]{Corollary}
\newtheorem{prop}[thm]{Proposition}
\newtheorem{lemma}[thm]{Lemma}
\theoremstyle{definition}
\numberwithin{equation}{section}
\title[Pure Quotients]{Pure quotients and Morita's theorem for $k_{\omega}$-spaces}
\author
[Lazar]{Aldo J. Lazar}
\address{School of Mathematical Sciences
\\Tel Aviv University
\\Tel Aviv 69978
\\Israel
} \email{aldo@tauex.tau.ac.il}
\author[Somerset]{Douglas W.B. Somerset}
 \email{douglassomerset@yahoo.com}
\begin{document}

\begin{abstract}
A $k_{\omega}$-space $X$ is a Hausdorff quotient
of a locally compact, $\sigma$-compact Hausdorff space. A theorem
of Morita's describes the structure of $X$ when the quotient map is closed, but in 2010 a question of Arkhangel'skii's highlighted
the lack of a corresponding theorem for non-closed quotient maps (even from subsets of $\R^n$).
Arkhangel'skii's specific question had in fact been answered by Siwiec in 1976, but a general structure theorem
for $k_{\omega}$-spaces is still lacking. We introduce pure quotient maps, extend Morita's
theorem to these, and use Fell's topology to show that every quotient map can be `purified' (and  thus
every $k_{\omega}$-space is the image of a pure quotient map).
This clarifies the structure of arbitrary $k_{\omega}$-spaces and gives a fuller answer to
Arkhangel'skii's question.
\end{abstract}

\maketitle

\thanks{}

\noindent {\bf 2010 Mathematics Subject Classification}: 54B15 (primary); 54D45 (secondary).


\bigskip

\section{Introduction}

\noindent A $k_{\omega}$-space -- or a hemicompact $k$-space -- is a Hausdorff space which is the image
of a locally compact, $\sigma$-compact Hausdorff space under a quotient map.
The class of $k_{\omega}$-spaces was extensively studied from the 1940s and the literature
of the subject (see \cite{Fr} for a useful summary)
contains some famous names: R. Arens, M. Graev, E. Michael,
J. Milnor, K. Morita, N. Steenrod, and others. Since then
$k_{\omega}$-spaces have become a standard tool (like locally compact Hausdorff spaces).

For general topologists, the class of $k_{\omega}$-spaces has good closure properties (under
quotients, closed subspaces, and finite products). In the study of spaces of continuous functions, the $k_{\omega}$-spaces
are those for which the space $C_k(X)$ of continuous real-valued functions on $X$, with the compact-open topology,
is completely metrizable (see \cite{MN} for example).  In C$^*$-algebras (the authors' interest)  the Glimm spaces of $\sigma$-unital
C$^*$-algebras belong to this class \cite[Theorem 2.6]{Laz}.
In topological algebras, the spectrum of a Fr\'echet algebra is usually (but not always) a $k_{\omega}$-space; and $k_{\omega}$-spaces crop up, too, in the study of topological
groups and semi-groups, and in mathematical economics, see \cite{CCM} for example.

A $k_{\omega}$-space is paracompact and normal \cite[Lemma 5]{Mori}, but can be nowhere first countable,
even when it is the quotient of a second countable space. The standard example is the countable space
$S_{\omega}$ introduced by Arkhangel'skii and Franklin \cite{AF}. One well-known weakening of first countability
is the {\sl Fr\'echet-Urysohn} property: a topological space $X$ is {\sl Fr\'echet-Urysohn} at $x\in X$ if $A\subseteq X$
and $x\in\overline A$ implies the existence of a sequence $(x_n)_{n\ge 1}\subseteq A$ with $\lim_n x_x=x$. If
$X$ is Fr\'echet-Urysohn at each point then $X$ is a {\sl Fr\'echet-Urysohn space}. The
sequential fan (consisting of  countably many convergent sequences with the non-isolated points identified to a single point)
is a countable Fr\'echet-Urysohn $k_{\omega}$-space with first countability failing at a single point.
Attempts to build more complicated examples run into difficulties, however, and in 2010, in exhibiting `a countable Tychonoff Fr\'echet-Urysohn
space which is nowhere first countable', Arkhangel'skii raised the question of whether a quotient of a locally compact second countable metric space
could be found with all these features \cite{Arh}. In point of fact this question had already been answered negatively by Siwiec in 1976 \cite{Siw},
but the fact that the question was even asked is significant because it highlights a basic gap in the understanding
of $k_{\omega}$-spaces.

The general picture is as follows. Recall that a point $x$ in a topological space $X$ is a {\sl $k$-point}
if $E\subseteq X$ and $x\in\overline E$ implies that there is a compact set $K$ such that $x\in \overline{E\cap K}$. Clearly
every point with a compact neighbourhood is a $k$-point.
As temporary notation, for a $k_{\omega}$-space $X$, let $L$ denote the set of points with compact neighbourhood, $F$ the set
of $k$-points without compact neighbourhood, and $N$ the set of non-$k$-points.
Then $L$ is obviously an open set, and Morita and Arkhangel'skii (Theorems 2.1 and 2.3 below)
showed that if $X$ is the image of a closed quotient map then $L$ is dense, $F$ is discrete, and $N$ is empty. Furthermore, the points of $L$
are characterised by the fact that their inverse images have compact boundary.

When a $k_{\omega}$-space $X$ is the image of a general (non-closed) quotient map $q$, the set $L$ can still be characterised by the biquotient property
and $L\cup F$ by pseudo-openness (definitions below), but there is no explicit extension of
Morita's theorem relating these properties to $q$ being locally closed or attempting to describe the sets $F$ and $N$.

In this paper we introduce pure quotients maps -- a partial generalisation of closed quotient maps -- and
show that whenever $X$ is the image of a quotient map $q$ from a locally compact $\sigma$-compact Hausdorff space $Y$,
there is a corresponding pure quotient map $q_*:Y_*\to X$ where $Y_*$ is  a locally compact $\sigma$-compact Hausdorff space derived from $Y$ using
Fell's topology on closed sets. Thus every $k_{\omega}$-space is the image of a pure quotient map.

For pure quotient maps, $N$ is precisely the set of points at which the map is not locally closed,
and we extend Morita's theorem by showing that $F$ is contained in the set of P-points of
    $N\cup F$ and is thus a P-space with empty interior in $X$ (Theorem 4.5). Thus a general $k_{\omega}$-space $X$ decomposes as the
disjoint union $L\cup F\cup N$ where $L$ is open and $F$ has empty interior and is contained in the set of P-points of
the closed set $F\cup N$. If $X$ is countably tight (in particular if $X$ is a quotient
of a locally compact subset of ${\R}^n$) then the P-points of $F\cup N$ are isolated points of $F\cup N$, lying in the closure of $L$; so $N$ is closed, $L\cup F$ is open
with $L$ dense, and $F$ is discrete (Corollary 4.6).
For general $X$, however, $F$ need not be discrete, nor $N$ closed, nor $F$ lie in the closure of $L$; and
we show by example that $F\cup N$ can be any compact Hausdorff space with $F$ as its set of P-points (Example 4.7).
A Fr\'echet-Urysohn space is countably tight with $N$ empty, so the answer to Arkhangel'skii's question quickly follows
from the description just given (see Corollary 5.3 and Theorem 5.5, where more general results are obtained).

The authors encountered pure quotient maps in work on Glimm spaces of C$^*$-algebras, where they occur naturally
(indeed Theorem 4.5, in the second countable case, was originally proved entirely using C$^*$-algebras).
They look less obvious when translated into a topological context, but perhaps that is part of their wider interest.


\section{Morita's theorem and locally closed maps}

\noindent Let $Y$ be a locally compact, $\sigma$-compact Hausdorff space
and $q:Y\to X$ a quotient map with $X$ Hausdorff. Then we can write $Y=\bigcup_{i\ge 1} Y_i$ where each $Y_i$ is compact and is contained in the
interior of its successor. We will say that $\{Y_i: i\ge 1\}$ is a {\sl compact decomposition} for $Y$, while the compact sets $\{ q(Y_i): i\ge 1\}$
are a {\sl $k_{\omega}$-decomposition} for $X$. The space $X$ is {\sl hemicompact} with regard to a $k_{\omega}$-decompositon; that is,
for any compact $K\subseteq X$, eventually $K\subseteq q(Y_i)$ for some $i$ (see \cite[p. 113]{Fr}).

\smallskip
One of the central results on $k_{\omega}$-spaces is due to Morita \cite[Theorem 4]{Mor}.

\begin{thm}\label{Theorem 2.1} {\rm(Morita, 1956)}
Let $Y$ be a locally compact, $\sigma$-compact Hausdorff space and $q:Y\to X$
a quotient map with $X$ Hausdorff. If $q$ is closed then $X$ is locally compact except
at a closed discrete set of exceptional points, namely those points $x$ for which the fibre $q^{-1}(x)$ has non-compact
boundary.
\end{thm}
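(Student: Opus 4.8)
The plan is to establish the single equivalence behind the statement: \emph{a point $x\in X$ has a compact neighbourhood if and only if the boundary $\partial q^{-1}(x)$ of its fibre is compact} (in a Hausdorff space one compact neighbourhood already yields a local base of compact ones, so this is exactly ``$X$ is locally compact at $x$''). Since the set $L$ of points possessing a compact neighbourhood is automatically open, the set $E$ of non-locally-compact points is closed, and the equivalence will identify $E$ with $\{x:\partial q^{-1}(x)\text{ non-compact}\}$; the only further thing to do is to show $E$ is discrete. Throughout I fix a compact decomposition $Y=\bigcup_i Y_i$ and use that a closed subset of $Y$ is compact iff it lies in some $Y_i$, that $X$ carries the weak topology with respect to the $k_\omega$-decomposition $\{q(Y_i)\}$, and that, $q$ being closed, for every open $U\supseteq q^{-1}(x)$ the set $X\setminus q(Y\setminus U)$ is an open neighbourhood of $x$ whose preimage lies in $U$.

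\emph{Compact boundary $\Rightarrow$ compact neighbourhood.} Using local compactness of $Y$, cover the compact set $\partial q^{-1}(x)$ by an open $U_0$ with compact closure and put $U=U_0\cup\interior q^{-1}(x)$, an open neighbourhood of the whole fibre $q^{-1}(x)$. Choosing an open $V\ni x$ with $q^{-1}(V)\subseteq U$ as above, surjectivity of $q$ gives $V=q(q^{-1}(V))\subseteq q(\overline{U_0})\cup\{x\}$, a compact set; hence $\overline V$ is a compact neighbourhood of $x$.

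\emph{Non-compact boundary $\Rightarrow$ no compact neighbourhood.} This is the step I expect to be the real obstacle, because a closed map may legitimately collapse non-compact sets; the point of using the boundary of the fibre rather than the fibre is precisely that boundary points can be approached from outside the fibre. Argue by contradiction: suppose $x$ has a compact neighbourhood $K$, put $V=\interior K$, and suppose $\partial q^{-1}(x)$ is not compact, hence $\partial q^{-1}(x)\not\subseteq Y_i$ for every $i$. Recursively pick $y_i\in\partial q^{-1}(x)\setminus Y_i$ together with the open set $W_i:=q^{-1}(V)\setminus\big(Y_i\cup q^{-1}(q(z_1))\cup\dots\cup q^{-1}(q(z_{i-1}))\big)$, which contains $y_i$ (since $q(y_i)=x\in V$ and $x\neq q(z_j)$); then, because $y_i\in\partial q^{-1}(x)$, choose $z_i\in W_i\setminus q^{-1}(x)$, so $q(z_i)\neq x$ and the construction forces the $q(z_i)$ to be pairwise distinct. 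As $z_i\notin Y_i$, the set $\{z_i:i\ge1\}$ meets every compact subset of $Y$ in a finite set, so (local compactness of $Y$) it is closed and discrete in $Y$; hence $q(\{z_i\})$ is closed in $X$ ($q$ closed) and, being the image of a discrete space under a closed continuous injection, is discrete; but it is contained in the compact set $K$, so it is compact, while it is infinite — and an infinite compact discrete space does not exist. This contradiction finishes the equivalence, so $E=X\setminus L=\{x:\partial q^{-1}(x)\text{ non-compact}\}$, a closed set.

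\emph{Discreteness of $E$.} I would show $E\cap q(Y_n)$ is finite for each $n$. If some $E\cap q(Y_n)$ were infinite, pick distinct $x_k$ in it; using $x_k\in E$ choose $w_k\in\partial q^{-1}(x_k)\setminus Y_k$, and using $x_k\in q(Y_n)$ choose $\eta_k\in q^{-1}(x_k)\cap Y_n$. As in the previous paragraph $\{w_k\}$ is closed and discrete in $Y$, so $\{x_k\}=q(\{w_k\})$ is closed and discrete in $X$; on the other hand $\{\eta_k\}$ is infinite inside the compact set $Y_n$, so it has an $\omega$-accumulation point $\eta^{\ast}$, and then $q(\eta^{\ast})$ is an $\omega$-accumulation point of $\{x_k\}$, contradicting that $\{x_k\}$ is closed and discrete. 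Hence $E$ is a closed subset of $X$ meeting each member $q(Y_n)$ of the $k_\omega$-decomposition in a finite, hence closed, set, so every subset of $E$ is closed in $E$; that is, $E$ is discrete. Combined with the equivalence, this is exactly the assertion that $X$ is locally compact off the closed discrete set $E$, whose points are precisely those $x$ for which $q^{-1}(x)$ has non-compact boundary.
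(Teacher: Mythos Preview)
Your argument is correct. The forward implication (compact boundary $\Rightarrow$ compact neighbourhood) is exactly the paper's Proposition~2.2, and your converse and discreteness arguments are sound: the key device---manufacturing a closed discrete set $\{z_i\}$ (resp.\ $\{w_k\}$) in $Y$ by escaping the compact decomposition, then using closedness of $q$ to push it to a closed discrete set in $X$ that collides with compactness---is clean and goes through as written.

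The paper does not give a self-contained proof of Theorem~2.1; it is quoted from Morita's 1956 paper and then \emph{recovered} after Theorem~4.5 by an entirely different route. There the argument is: for $q$ closed, $N_q=\emptyset$, so $F_q$ is closed in $X$, hence $\sigma$-compact; Theorem~4.5(iii) shows $F_q$ is a P-space, and a $\sigma$-compact P-space is countable and discrete. Your approach is more direct and elementary---it needs only the closedness of $q$, local compactness of $Y$, and the $k_\omega$-decomposition, with no detour through pure quotients, the Fell hyperspace, or P-points. What the paper's route buys is uniformity: the same machinery that yields Morita's theorem when $q$ is closed also describes $F_q$ and $N_q$ for arbitrary (pure) quotient maps, which is the paper's real goal. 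Your argument, by contrast, is tailored to the closed case and does not obviously generalise, but it gives Morita's theorem with minimal overhead.
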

\smallskip
\noindent As Morita observes, the discrete set of exceptional points is in fact contained in a larger discrete set of points with non-compact
fibre. For all other points, the fibre is compact.

\medskip
\noindent If $q$ is not closed, the situation becomes more complicated, and to discuss this the following definition is useful.
Let $X$ and $Y$ be topological spaces. A map $q:Y\to X$ is {\sl locally closed} at a point $x\in X$ if for every closed set
$W\subseteq Y$, $x\in\overline {q(W)}$ implies $x\in q(W)$ \cite[\S 13.XIV]{Kur}. Clearly if $q$ surjective then $q$ is closed if and only
$q$ is locally closed at each point of $x$. One basic result is as follows.

\begin{prop}\label{Proposition 2.2.} Let $Y$ be a locally compact Hausdorff space and $q:Y\to X$ a continuous surjective map
with $X$ Hausdorff. Let $x\in X$ and suppose that $q$ is locally closed at $x$ and that the boundary of $q^{-1}(x)$
is compact. Then $x$ has a compact neighbourhood in $X$.
\end{prop}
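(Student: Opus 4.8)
The plan is to use the hypotheses to confine a whole neighbourhood of $x$ inside a compact subset of $X$. Set $F:=q^{-1}(x)$. Since $X$ is Hausdorff and $q$ continuous, $F$ is closed, so $F=\interior F\cup\partial F$, and $\partial F$ is compact by hypothesis. By local compactness of $Y$, I would first pick an open set $U$ with $\partial F\subseteq U$ and $\overline U$ compact (with $U=\emptyset$ allowed if $\partial F=\emptyset$).

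The next and key step is to feed the right closed set into the definition of locally closed at $x$. Let $W:=Y\setminus(U\cup\interior F)$, which is closed. Since $U\cup\interior F\supseteq\partial F\cup\interior F=F$, the set $W$ is disjoint from $F=q^{-1}(x)$, so $x\notin q(W)$. Applying ``$q$ is locally closed at $x$'' in contrapositive form gives $x\notin\overline{q(W)}$, so $N:=X\setminus\overline{q(W)}$ is an open neighbourhood of $x$; and from $N\cap q(W)=\emptyset$ we get $q^{-1}(N)\cap W=\emptyset$, that is, $q^{-1}(N)\subseteq U\cup\interior F$.

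To finish, fix some $y_0\in F$ and set $K:=q(\overline U\cup\{y_0\})$, a compact set. Using surjectivity of $q$ and the inclusion just obtained,
\[
N=q\bigl(q^{-1}(N)\bigr)=q\bigl(q^{-1}(N)\cap U\bigr)\cup q\bigl(q^{-1}(N)\cap\interior F\bigr)\subseteq q(U)\cup\{x\}\subseteq K,
\]
because $\interior F\subseteq q^{-1}(x)$ and $x=q(y_0)\in K$. Hence $K$ is a compact neighbourhood of $x$.

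I do not expect a real obstacle: the proof is essentially the choice of $W$ above. The points that need a little care are the degenerate case $\partial F=\emptyset$ (the argument still runs, with $q(U)=\emptyset$ and $y_0$ supplying $x$; in fact it then shows $x$ is isolated), and being precise that ``locally closed at $x$'' is being used as: $W$ closed and $x\notin q(W)$ imply $x\notin\overline{q(W)}$. Since $X$ is Hausdorff, $K$ is moreover closed, though the statement does not ask for that.
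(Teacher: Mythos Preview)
Your proof is correct and follows essentially the same route as the paper's: choose an open set with compact closure around the boundary of the fibre, apply local closedness at $x$ to the complement of (that open set $\cup$ the fibre), and observe that the resulting open neighbourhood of $x$ lands inside a compact image. The paper's $O$ is your $U$, its $U=q^{-1}(x)\cup O$ is your $U\cup\interior F$, and its $V$ is your $N$; the only cosmetic difference is that the paper bounds $\overline V$ by $\{x\}\cup q(\overline O)$ directly, while you introduce $y_0$ and $K$ to the same effect.
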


\begin{proof} If the boundary of $q^{-1}(x)$ is compact then it is contained in an open subset $O$
of $Y$ whose closure is compact. The set $U:=q^{-1}(x)\cup O$ is open and the map $q$
is locally closed at $x$. Thus $V:=X\setminus \overline{q(Y\setminus U)}$ is an open neighbourhood of $x$.
We have $q^{-1}(x)\subseteq q^{-1}(V)$, hence $\overline V\subseteq\overline{q(U)}=\{x\}\cup\overline{q(O)}=
\{x\}\cup q(\overline O)$, and thus $\overline V$ is compact.
\end{proof}

\medskip
\noindent In general, there seems to be little relation between the compactness of the fibre and
`local closedness' of the quotient map on the one hand, and the local compactness of the image on the other.
The following example is typical. Let $Y=\bigcup_{n\ge 0} L_n$ where $L_n$ is the line $y=n$ in $ {\R}^2$, and let
$q:Y\to X$ be the projection of $Y$ onto the $x$-axis, which is a quotient map. Then $X$ is locally compact and Hausdorff
but $q$ is nowhere locally closed and $q^{-1}(x)$ has non-compact boundary for every $x\in X$.

\medskip

A rather similar definition to `locally closed' was introduced independently by various authors, the earliest of whom
seems to have been McDougle \cite{McD}. Let $X$ and $Y$ be topological spaces.
A surjective map $f:Y\to X$ is {\sl pseudo-open at $x\in X$} if whenever $U$ is an open subset
of $Y$ containing $f^{-1}(x)$, $f(U)$ is a neighbourhood of $x$.  If $f$ is pseudo-open
at every point of $X$ then $f$ is said to be {\sl pseudo-open}.  A pseudo-open map is easily seen to be a quotient map,
and if $f$ is locally closed at $x$ then $f$ is pseudo-open. The example above, with $q$ open but nowhere locally closed,
shows that the converse is not true. We will see, however, that the two conditions are equivalent if the quotient map $q$ is pure.

\smallskip

The next theorem was essentially proved in \cite[Theorems 3.3 and 3.4]{Arhang} (see also \cite{Arha}) but our statement of it is somewhat different, so we
include the short proof here.
\medskip

\begin{thm}\label{Theorem 2.3} {\rm (Arkhangel'skii, 1963)}  Let $Y$ be a locally compact $\sigma$-compact Hausdorff space and $q:Y\to X$ a quotient
map with $X$ Hausdorff. Let $x\in X$. Then $x$ is a $k$-point if and only if $q$ is pseudo-open at $x$.
\end{thm}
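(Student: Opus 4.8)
The plan is to prove both implications directly from the definitions, exploiting the hemicompactness of $X$ with respect to a $k_\omega$-decomposition.

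For the direction ``$q$ pseudo-open at $x$ $\Rightarrow$ $x$ is a $k$-point'': suppose $x\in\overline E$ for some $E\subseteq X$. I would argue by contradiction: if no compact $K$ satisfies $x\in\overline{E\cap K}$, then taking a $k_\omega$-decomposition $\{q(Y_i)\}$ of $X$, for each $i$ there is an open neighbourhood $V_i$ of $x$ with $V_i\cap E\cap q(Y_i)=\emptyset$. Pulling back, $O:=q^{-1}(x)\cup\bigcup_i\big(q^{-1}(V_i)\cap\interior Y_{i+1}\setminus Y_{i}\big)$ (or a similar carefully chosen open set built by intersecting with the open shells $\interior Y_{i+1}\setminus Y_{i-1}$) is an open set in $Y$ containing $q^{-1}(x)$. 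By pseudo-openness $q(O)$ is a neighbourhood of $x$, so $q(O)$ meets $E$; but any point of $q(O)\setminus\{x\}$ lies in some $q(Y_{i+1})$ and in some $V_j$ for $j\ge i+1$, and by construction $V_j\cap E\cap q(Y_j)=\emptyset$ — the contradiction. I expect assembling the open set $O$ so that it both contains the fibre and avoids $E$ layer-by-layer to be the delicate bookkeeping step.

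For the converse, ``$x$ a $k$-point $\Rightarrow$ $q$ pseudo-open at $x$'': let $U\subseteq Y$ be open with $q^{-1}(x)\subseteq U$, and suppose for contradiction that $q(U)$ is not a neighbourhood of $x$, i.e. $x\in\overline{X\setminus q(U)}$. Put $E:=X\setminus q(U)$. Since $x$ is a $k$-point, there is a compact $K\subseteq X$ with $x\in\overline{E\cap K}$. By hemicompactness $K\subseteq q(Y_i)$ for some $i$, so $x\in\overline{E\cap q(Y_i)}$. Now $q^{-1}(x)\subseteq U$ and $Y_i$ is compact, so $q^{-1}(x)\cap Y_i$ is a compact subset of the open set $U$; using local compactness choose an open $O$ with $q^{-1}(x)\cap Y_i\subseteq O\subseteq\overline O\subseteq U$ and $\overline O$ compact. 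Then $q^{-1}(E)\cap Y_i$ is a compact set disjoint from $q^{-1}(x)$, and one shows $q(Y_i\setminus O)$ is a compact set containing $E\cap q(Y_i)$ but not $x$, contradicting $x\in\overline{E\cap q(Y_i)}$. The point here is that $Y_i\setminus O$ is compact and $q(Y_i\cap q^{-1}(E))\subseteq q(Y_i\setminus O)$ since $q^{-1}(E)\cap O=\emptyset$ (as $q^{-1}(x)\supseteq$... rather: $O\subseteq U=Y\setminus q^{-1}(E)\cup q^{-1}(x)$, so a small adjustment is needed to ensure $O\subseteq q^{-1}(q(U))$, which holds once $O\subseteq U$ only up to the fibre issue — this is routine).

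The main obstacle is the first implication: one must manufacture, from the countably many ``bad'' neighbourhoods $V_i$ of $x$, a single open set in $Y$ that contains the whole fibre $q^{-1}(x)$ yet whose image still misses $E$. This requires using the compact decomposition $\{Y_i\}$ to localise: the fibre, though possibly non-compact, meets each $Y_i$ in a compact set, so one can fatten $q^{-1}(x)\cap Y_i$ inside $q^{-1}(V_i)$ within the shell between consecutive $Y_j$'s, and then take the union. Making the shells overlap correctly so the union is open and genuinely avoids $E$ on every $q(Y_j)$ is where the care lies; everything else reduces to the hemicompactness property quoted from \cite[p.~113]{Fr} and elementary local compactness.
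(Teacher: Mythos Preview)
Your second implication ($k$-point $\Rightarrow$ pseudo-open) is essentially the paper's argument: set $E=X\setminus q(U)$, use the $k$-point property to find compact $K$ with $x\in\overline{E\cap K}$, lift $K$ to a compact $L\subseteq Y$ via hemicompactness, and observe that $E\cap K\subseteq q(q^{-1}(E)\cap L)$ so that $x\in q(\overline{q^{-1}(E)\cap L})$, forcing $\overline{q^{-1}(E)}$ to meet $q^{-1}(x)$ --- a contradiction since $q^{-1}(E)\subseteq Y\setminus U$. Your extra set $O$ and the ``small adjustment'' are unnecessary; the closed set $Y\setminus U$ already does the job.

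For the first implication, your layered-shell construction can be made to work (with indices such as $W_i=q^{-1}(V_i)\cap(\interior Y_i\setminus Y_{i-2})$ and $V_i$ decreasing), but it is far more laborious than what the paper does, and it consumes the $\sigma$-compactness hypothesis that the paper explicitly notes is \emph{not needed} for this direction. The paper's argument is two lines: if $x\in\overline E$, then $\overline{q^{-1}(E)}$ must meet $q^{-1}(x)$, for otherwise $U:=Y\setminus\overline{q^{-1}(E)}$ is an open set containing $q^{-1}(x)$ with $q(U)\cap E=\emptyset$, and pseudo-openness would give a neighbourhood of $x$ missing $E$. Picking any $y\in\overline{q^{-1}(E)}\cap q^{-1}(x)$ and a compact neighbourhood $L$ of $y$ (local compactness of $Y$), one has $y\in\overline{q^{-1}(E)\cap L}$, hence $x\in\overline{E\cap q(L)}$. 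What your approach buys is nothing extra; what it costs is the clean separation of hypotheses (only local compactness for one direction, $\sigma$-compactness for the other) and a page of index bookkeeping that you yourself flag as ``the main obstacle''.
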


\begin{proof} Suppose first that $q$ is pseudo-open at $x$ (this does not use the $\sigma$-compactness
of $Y$). Let $E\subseteq X$ with $x\in\overline E$, and set
$F=q^{-1}(E)$. Then $\overline F$ meets $q^{-1}(x)$, at $y$ say, because $q$ is pseudo-open (for otherwise there is
an open set $U$ containing $q^{-1}(x)$ and disjoint from $F$, and then $q(U)$ is a neighbourhood of $x$
disjoint from $E$). Let $L$ be a compact neighbourhood of $y$ and set $K=q(L)$. Then $y\in \overline{F\cap L}$
and $x\in\overline{E\cap K}$.

Conversely, suppose that $x$ is a $k$-point and let $U$ be an open subset
of $Y$ containing $f^{-1}(x)$. Let $E=X\setminus q(U)$ and $F=q^{-1}(E)$. Then $F$ does not meet $U$. If $x\in\overline E$
then there exists a compact set $K$ such that $x\in \overline{E\cap K}$. By hemicompactness, there exists
compact $L$ in $Y$ such $q(L)=K$. Then $\overline{F\cap L}$ is compact and $x\in q(\overline{F\cap L})$. Hence
$\overline F$ meets $q^{-1}(x)$, contradicting the fact that $F$ does not meet $U$. Thus $x\notin \overline E$, and
$q(U)$ is a neighbourhood of $x$.
\end{proof}

\medskip
\noindent For a quotient map $q: Y\to X$, let $H_q$ be the set of points in $X$ at which $q$ is pseudo-open.
Theorem 1.3 shows that if $Y$ is a locally compact $\sigma$-compact Hausdorff space and $X$ is Hausdorff
then $H_q$ is the set of $k$-points in $X$.

Among the $k$-points, the
points $x$ of local compactness in a $k_{\omega}$-space (and more generally) can be characterised as those which satisfy the {\sl biquotient} property: every open
cover $\mathcal {U}$ of $q^{-1}(x)$ has a finite subset $\{U_1, \ldots, U_n\}\subseteq \mathcal{U}$ such that $\{ q(U_1), \ldots, q(U_n)\}$ covers
a neighbourhood of $x$ (see \cite{Stone}, \cite{Mich}). While the characterisations given by the biquotient property and by pseudo-openness are elegant and versatile, they
do not seem to lead to a structure theory for general $k_{\omega}$-spaces along the lines of Morita's theorem. Something is needed to bring the ideas together.

\section{Pure quotient maps}

\noindent  In this section we introduce pure quotient maps. They are weaker than closed
quotient maps in one important aspect, but with a compensatory strengthening in another direction; and while they retain some of the nice properties
of closed quotient maps, they are much more general. We begin with a preparatory lemma.

\begin{lemma}\label {Lemma 3.1.} Let $Y$ be a locally compact $\sigma$-compact Hausdorff space and $q:Y\to X$ a quotient map with $X$ Hausdorff space.
Then the following are equivalent:

(i) $q$ is not closed;

(ii) there are nets $(y_{\alpha})$ (with the points $y_{\alpha}$ from distinct fibres) and $(z_{\alpha})$ in $Y$ with $y_{\alpha}\to \infty$ and
$z_{\alpha}\not\to\infty$ such that $q(y_{\alpha})=q(z_{\alpha})$ for all $\alpha$;

(iii) there are sequences $(y_n)$ (with the points $y_n$ from distinct fibres) and $(z_n)$ in $Y$ with $y_n\to \infty$ and $(z_n)$
contained in a compact set such that $q(y_n)=q(z_n)$ for all $n$.
\end{lemma}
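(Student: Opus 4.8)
The three conditions essentially unpack the failure of closedness using the $k_\omega$-structure, so the proof will be a cycle of implications, with the compact decomposition $\{Y_i\}$ and the hemicompactness of $X$ doing the real work. I would prove (i)$\Rightarrow$(ii)$\Rightarrow$(iii)$\Rightarrow$(i), since (iii)$\Rightarrow$(ii) is trivial and (i)$\Rightarrow$(iii) is the hardest direction to do directly.

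For (i)$\Rightarrow$(ii): if $q$ is not closed, pick a closed set $W\subseteq Y$ with $q(W)$ not closed, and a point $x\in\overline{q(W)}\setminus q(W)$. Choose a net $(w_\alpha)$ in $W$ with $q(w_\alpha)\to x$. The fibre $q^{-1}(x)$ is disjoint from $W$ (which is closed), so no subnet of $(w_\alpha)$ can converge into $q^{-1}(x)$; I would want to arrange that $w_\alpha\to\infty$ in $Y$ (i.e.\ eventually leaves every $Y_i$). This is the point that needs a little care: if $(w_\alpha)$ had a subnet staying in some compact $Y_i$, that subnet would cluster at some $y\in Y_i$, and by continuity $q(y)=x$, contradicting $x\notin q(W)$; so after passing to a subnet, $w_\alpha\to\infty$. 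Set $y_\alpha:=w_\alpha$. For the companion net, pick $z_\alpha\in q^{-1}(x)$ — wait, that gives $q(z_\alpha)=x$ but not $q(y_\alpha)=q(z_\alpha)$. Instead: since $q$ is a quotient map and $x\in\overline{q(W)}$, for each $\alpha$ pick $z_\alpha\in q^{-1}(q(y_\alpha))$; I can choose $z_\alpha$ so that the net $(z_\alpha)$ does \emph{not} go to infinity by using the $k_\omega$-structure — actually the cleanest route is to shrink further: passing to a cofinal set of $\alpha$, $q(y_\alpha)$ lies in the $k_\omega$-decomposition piece $q(Y_j)$ for some fixed large $j$ (because eventually the relevant points accumulate near $x$, and $\{x\}\cup\{q(y_\alpha)\}$ — if we extract a convergent subnet — sits in a compact set, hence eventually in some $q(Y_j)$ by hemicompactness). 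Then choose $z_\alpha\in Y_j\cap q^{-1}(q(y_\alpha))$, so $(z_\alpha)$ stays in the compact set $Y_j$ and $z_\alpha\not\to\infty$. Finally, since the $z_\alpha$ cluster in $Y_j$, thinning once more we may assume the $q(y_\alpha)=q(z_\alpha)$ lie in distinct fibres (if infinitely many coincided, those $y_\alpha$ would share a fibre and one checks the contradiction with $y_\alpha\to\infty$ versus $q$ continuous on compacts; if only finitely many distinct fibres appeared cofinally we could reduce to a single fibre and derive a contradiction with $x\notin q(W)$). This simultaneously establishes the stronger statement needed for (iii), so in fact I would merge (i)$\Rightarrow$(ii) and (i)$\Rightarrow$(iii): the argument above, once the clustering subnet lies in a fixed compact $Y_j$, lets me replace the net by a \emph{sequence} (since $Y_j$ is compact and $q(Y_j)$ is a continuous image, first-countability is not available, but one can extract a sequence realizing the convergence $q(y_\alpha)\to x$ because... ) — hmm, this is the genuine obstacle.

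\textbf{The main obstacle} is precisely the passage from nets to sequences in (iii) in a space that need not be first countable. The resolution is that $x\in\overline{q(W)}$ is a statement that can be detected inside the \emph{compact} set $K:=q(Y_j)$: we have $x\in\overline{q(W)\cap K}$ once the relevant part of $q(W)$ is trapped in $K$. Now $K$ is a compact Hausdorff space, still not metrizable in general, so I cannot yet extract a sequence. But I \emph{can} work upstairs: the set $q^{-1}(x)\cap Y_{j+1}$ is compact and disjoint from the closed set $W$, so there are disjoint open sets separating them; pulling back a countable neighbourhood basis is still unavailable — instead, use that $Y_j$ is compact and $\sigma$-compact hence (being locally compact Hausdorff and $\sigma$-compact) actually metrizable? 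No — locally compact $\sigma$-compact Hausdorff spaces need not be metrizable. So the right move is: $q(W)$ fails to be closed, meaning $q^{-1}(q(W))$ is not closed in $Y$ (equivalently $q(W)$ not closed, by the quotient property applied to $X\setminus q(W)$, whose preimage is open iff $q(W)$ closed). So there is a point $y_0\in\overline{q^{-1}(q(W))}\setminus q^{-1}(q(W))$. Since $Y$ is locally compact Hausdorff, $y_0$ has a compact neighbourhood, and — this is the key — since $q^{-1}(q(W))$ is \emph{not} closed at $y_0$ but $Y$ need not be first countable either, I still only get a net.

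\textbf{Correct resolution of the obstacle.} Rather than fight first-countability, I invoke $\sigma$-compactness one more time at the level of $Y$: take $y_n\to\infty$ to mean $y_n\notin Y_n$; the existence of such a \emph{sequence} comes from the following. Since $q$ is not closed there is (as above) a net $(y_\alpha)$ with $y_\alpha\to\infty$, $z_\alpha$ in a fixed compact $Y_j$, $q(y_\alpha)=q(z_\alpha)$, distinct fibres. For each $n$, the truncated tail $\{\alpha : y_\alpha\notin Y_n\}$ is cofinal, so I can recursively pick indices $\alpha_1<\alpha_2<\cdots$ (in the net's directed set, choosing each $\alpha_{n+1}$ to dominate $\alpha_n$ and satisfy $y_{\alpha_{n+1}}\notin Y_{n+1}$) and set $y_n':=y_{\alpha_n}$, $z_n':=z_{\alpha_n}$. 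Then $y_n'\to\infty$, $(z_n')\subseteq Y_j$ compact, $q(y_n')=q(z_n')$, and the fibres are distinct — no first-countability needed, because I am extracting a sequence \emph{from a net whose index set I control}, not from a topological accumulation. This gives (iii) directly from (i), and (iii)$\Rightarrow$(ii) and (ii)$\Rightarrow$(i) are then routine: for (ii)$\Rightarrow$(i), the set $W:=\Cl(\{y_\alpha\})$ — more precisely $W$ the closure of the set of $y_\alpha$'s — is closed, $x:=\lim q(z_\alpha)$ (after passing to a subnet with $z_\alpha$ convergent, using compactness) lies in $\overline{q(W)}$ since $q(y_\alpha)=q(z_\alpha)\to x$, but $x\notin q(W)$ because $q^{-1}(x)$ is disjoint from $W$ (any cluster point of $(y_\alpha)$ would contradict $y_\alpha\to\infty$), so $q(W)$ is not closed and $q$ is not closed. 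I would streamline the writeup so the bulk of the argument appears once, in (i)$\Rightarrow$(iii).
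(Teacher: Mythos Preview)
Your cycle (i)$\Rightarrow$(ii)$\Rightarrow$(iii)$\Rightarrow$(i) and your extraction of a sequence from a net in (ii)$\Rightarrow$(iii) are both fine and essentially match the paper. The real gap is in (i)$\Rightarrow$(ii), at the point where you try to produce the companion net $(z_\alpha)$ inside a fixed compact set.

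You argue that, after passing to a convergent subnet, the set $\{x\}\cup\{q(y_\alpha)\}$ ``sits in a compact set, hence eventually in some $q(Y_j)$ by hemicompactness.'' This is false: a convergent net in a $k_\omega$-space together with its limit need not form a compact set, and in fact need not admit any subnet lying in a fixed $q(Y_j)$ unless $x$ has a compact neighbourhood. (Take the neighbourhood filter of a non-locally-compact point as index set and choose each $x_{(U,i)}\in U\setminus q(Y_i)$; no subnet is trapped in any $q(Y_j)$.) So your route to obtaining $z_\alpha\in Y_j$ does not go through as stated. A correct variant of your idea is available: since $X$ carries the weak topology determined by $\{q(Y_i)\}$, the non-closedness of $q(W)$ is witnessed by some $i$ with $q(W)\cap q(Y_i)$ not closed in $q(Y_i)$, and one can choose $x$ and the net already inside $q(Y_i)$. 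But that is not the argument you wrote.

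The paper's approach avoids this altogether by the very saturation trick you mention and then dismiss. If $W_0:=q^{-1}(q(W))$ is not closed, pick $w\in\overline{W_0}\setminus W_0$ and a net $(z_\alpha)$ in $W_0$ with $z_\alpha\to w$; local compactness of $Y$ (not $X$) puts this net eventually into a compact neighbourhood of $w$, so $z_\alpha\not\to\infty$ comes for free. The corresponding $y_\alpha\in W$ with $q(y_\alpha)=q(z_\alpha)$ must tend to infinity, since any cluster point in $W$ would force $w\in W_0$. You had this in hand but discarded it because ``I still only get a net''---whereas a net converging in $Y$ is precisely what you need.

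A smaller issue: in your closing step (ii)$\Rightarrow$(i) you take $W=\Cl(\{y_\alpha\})$ and assert $q^{-1}(x)\cap W=\emptyset$ because ``any cluster point of $(y_\alpha)$ would contradict $y_\alpha\to\infty$.'' That controls cluster points of the \emph{net}, not limit points of the \emph{set} $\{y_\alpha\}$; for a net indexed by an arbitrary directed set the bounded part can be infinite and $W$ may strictly contain $\{y_\alpha\}$. The paper sidesteps this by running (iii)$\Rightarrow$(i) with sequences, where $\{y_n:n\ge1\}$ is genuinely closed because each $Y_i$ contains only finitely many terms.
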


\begin{proof} (i)$\Rightarrow$(ii). Let $W$ be a closed subset of $Y$ such that $q(W)$ is not closed.
Then the saturation $W_0$ of $W$ is not closed, so there is a net $(z_{\alpha})$ in $W_0$, which can be chosen from distinct fibres, such that $z_{\alpha}\to w\notin W_0$.
Let $(y_{\alpha})$ be a corresponding net in $W$ with $y_{\alpha}$ and $z_{\alpha}$ from the same fibre for each $\alpha$. Then if $(y_{\alpha})$
had any convergent subnet $(y_{\beta})$ with limit $y\in W$, $y$ and $w$ would lie in the same fibre so $w$ would belong to $W_0$, a contradiction. Hence
$y_{\alpha}\to \infty$.

(ii)$\Rightarrow$(iii). If (ii) holds, then $(z_{\alpha})$ is frequently in some compact set $K$, so for each $n\ge 1$, we may choose $z_n$
from the set $\{ z_{\alpha}\}$ such the corresponding $y_n\in \{ y_{\alpha}\}$ does not belong to $K_n$ (where $Y=\bigcup_{n\ge 1} K_n$ is a compact
decomposition for $Y$). Then $y_n\to\infty$ while $(z_n)$ is contained in $K$.

(iii)$\Rightarrow$(i). If (iii) holds, let $K$ be the compact set containing $(z_n)$. Then $(z_n)$ has subnet
converging to some $y\in K$, so $q(y)$ lies in the closure of the set $\{q(z_n): n\ge 1\}$. But $\{q(z_n):n\ge 1\}=q(\{y_n: n\ge 1\})$, the image of
a closed set, so $q$ is not a closed map.
\end{proof}

\bigskip
\noindent {\bf Definition:}
Let $Y$ be a locally compact $\sigma$-compact space and $q:Y\to X$ a quotient map with $X$ Hausdorff.
We say that $q$ is {\sl pure} if there is a subset $D$ of $Y$
such that (i) $D$ is dense in $Y$ and the restriction of $q$ to $D$ is injective, and (ii) for every net $(d_{\alpha})$ in $D$, if $d_{\alpha}\to\infty$ (i.e.
eventually escapes from every compact set in $Y$) then $e_{\alpha}\to \infty$ for any other net $(e_{\alpha})$ for
which $q(d_{\alpha})=q(e_{\alpha})$ for all $\alpha$.

\medskip
\noindent Condition (ii) is equivalent to requiring that if $d_{\alpha}\to\infty$ then the net
$q^{-1}(q(d_{\alpha}))$ converges to infinity in the Fell topology. Lemma 3.1 shows that condition (ii) is automatically satisfied whenever $q$ is a closed quotient map.

On the other hand, condition (i) implies that for every $x\in X$, the
interior of $q^{-1}(x)$ is either empty or consists of an isolated point. In fact condition (i) is equivalent to this when $Y$ is second countable.
To see this, let $D_0$ be the (countable) set of isolated points in $Y$ and  fix a countable base $\{U_i\}_{i\ge 1}$ for
$Y\setminus \overline{D_0}$. Inductively chose a point from each $U_i$ which does not belong to any fibre with non-empty interior
or any fibre previously chosen. This is possible because $U_i$ contains no isolated point and hence is an uncountable Baire space, and every
fibre intersects $U_i$ in a closed set with empty interior. Let $D_1$ be the set thus obtained, and set
$D=D_0\cup D_1$. Then $D$ is dense in $Y$ and $q$ restricted to $D$ is injective.

\medskip
\noindent Here is an alternative description of pure quotient maps  that will be useful later.
\medskip

\begin{lemma}\label{Lemma 3.2} Let $Y$ be a locally compact, $\sigma$-compact Hausdorff space and $q:Y\to X$ a quotient
map with $X$ Hausdorff. Then the following are equivalent:

(a) $q$ is pure;

(b) there is a subset $D$ in $Y$ such that (i) $D$ is dense and the restriction of $q$ to $D$ is injective, and (ii) whenever $Y=\bigcup_{i\ge 1} Y_i$ is a compact
decomposition of $Y$, the sets $D_i:=\{ d\in D: q(d)\in q(Y_i)\}$ have compact closure in $Y$.
\end{lemma}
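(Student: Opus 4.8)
The plan is to show that conditions (a) and (b) differ only in how they phrase the "escape to infinity" requirement, so the argument is essentially a translation between nets escaping to infinity in $Y$ and sets having compact closure. I would keep the set $D$ fixed throughout: a witnessing set for (a) should serve as a witnessing set for (b) and vice versa, so in both directions I take the $D$ supplied by the hypothesis and verify the other clause (ii). Clause (i) is identical in both statements, so there is nothing to do there.

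For (a)$\Rightarrow$(b), fix a compact decomposition $Y=\bigcup_{i\ge 1} Y_i$ and suppose, for contradiction, that some $D_i$ does not have compact closure in $Y$. Since $Y$ is locally compact and $\sigma$-compact, a subset fails to have compact closure precisely when it admits a net escaping to infinity; so choose a net $(d_\alpha)$ in $D_i$ with $d_\alpha\to\infty$. For each $\alpha$ we have $q(d_\alpha)\in q(Y_i)$, so pick $e_\alpha\in Y_i$ with $q(e_\alpha)=q(d_\alpha)$. By purity (condition (ii) of the definition applied to the net $(d_\alpha)$ and the "other net" $(e_\alpha)$), $e_\alpha\to\infty$; but $(e_\alpha)$ lies in the compact set $Y_i$, a contradiction. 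Hence every $D_i$ has compact closure, and $D$ witnesses (b).

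For (b)$\Rightarrow$(a), fix the $D$ from (b) and let $(d_\alpha)$ be a net in $D$ with $d_\alpha\to\infty$, and let $(e_\alpha)$ be any net with $q(d_\alpha)=q(e_\alpha)$ for all $\alpha$; I must show $e_\alpha\to\infty$. If not, then $(e_\alpha)$ is frequently in some compact set $K$. Fix a compact decomposition $Y=\bigcup_{i\ge 1} Y_i$; then $K\subseteq Y_i$ for some $i$, so the corresponding $d_\alpha$ are frequently in $D_i=\{d\in D: q(d)\in q(Y_i)\}$ (using $q(d_\alpha)=q(e_\alpha)\in q(K)\subseteq q(Y_i)$ along that cofinal subnet). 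By (b), $\overline{D_i}$ is compact, so this subnet of $(d_\alpha)$ is frequently in a fixed compact set, contradicting $d_\alpha\to\infty$. Therefore $e_\alpha\to\infty$, so $q$ is pure.

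The one point needing a little care — and the only real obstacle — is the standard fact I am invoking about locally compact $\sigma$-compact spaces: a subset $A$ has non-compact closure if and only if there is a net in $A$ that escapes to infinity (equivalently, $A$ is not contained in any $Y_i$ of a compact decomposition). This is routine: if $A\subseteq Y_i$ then $\overline A\subseteq Y_i$ is compact; conversely if $\overline A$ is not compact then $\overline A\not\subseteq\interior Y_{i+1}$ for every $i$, so one can pick $a_i\in A$ outside $Y_i$ (after passing to $\overline A$ and using density, or directly since $\overline A$ meets the complement of $Y_i$ and that complement is open), giving a sequence escaping to infinity. I would state this equivalence in one sentence and then run the two implications above without further fuss; the rest is just unwinding definitions.
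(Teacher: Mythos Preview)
Your proposal is correct and follows essentially the same route as the paper's proof: in both directions you keep the same witnessing set $D$, and the contradictions you derive (a net in $D_i$ escaping to infinity versus its fibre-partners trapped in $Y_i$, and conversely) are exactly those in the paper. The only cosmetic difference is that the paper uses a sequence rather than a net in the (a)$\Rightarrow$(b) direction, and it leaves implicit the ``standard fact'' about non-compact closure that you spell out.
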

\medskip
\begin{proof} (a)$\Rightarrow$(b). Let $W$ be the closure of $D_i$. If $W$ is not compact,
then there is a sequence $(d_j)$ in $D_i$ with $d_j\to\infty$ as $j\to\infty$. But for each $j$, there
exists $y_j\in Y_i$ such that $q(y_j)=q(d_j)$. Since $Y_i$ is compact, $y_j\not\to\infty$, contradicting condition
(ii) for pure quotient maps.

(b)$\Rightarrow$(a). Let $(d_{\alpha})$ be a net in $D$ with $d_{\alpha}\to\infty$. Let $(y_{\alpha})$
be any net in $Y$ such that $q(y_{\alpha})=q(d_{\alpha})$ for each $\alpha$. Then for any $Y_i$,
if $(y_{\alpha})$ were frequently in $Y_i$ then $(d_{\alpha})$ would frequently be in the compact closure
of $D_i$, contradicting the convergence of $(d_{\alpha})$ to infinity. Hence $(y_{\alpha})$ is eventually
outside each compact set $Y_i$, so $y_{\alpha}\to \infty$.
\end{proof}

\smallskip
\noindent Thus if $q$ is a pure quotient map and $Y=\bigcup_{i\ge 1} Y_i$ is a compact
decomposition of $Y$, then for each $Y_i$ we may find $Y_{d(i)}$ with $d(i)\ge i$ such that $D_i\subseteq Y_{d(i)}$.
\bigskip

\noindent The usefulness of pure quotient maps appears in the following converse to Proposition 2.2 (note that
because a fibre of a pure quotient map has at most a singleton as its interior, the fibre is compact if and only if its
boundary is compact).
\bigskip

\begin{thm}\label{Theorem  3.3.} Let $Y$ be a locally compact, $\sigma$-compact Hausdorff space and $q: Y\to X$ a pure quotient map with $X$ Hausdorff.
If $x\in X$ has a compact neighbourhood then $q$ is locally closed at $x$ and $q^{-1}(x)$ is compact.
\end{thm}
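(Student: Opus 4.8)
The plan is to argue by contradiction, using the net characterization of purity (condition (ii) in the Definition) together with the compact decomposition machinery from Lemma~3.2. Suppose $x$ has a compact neighbourhood $C$ in $X$, so that $x\in\interior(C)$. First I would observe that the fibre $q^{-1}(x)$ is contained in the closed (hence, by local compactness, ``locally compact'') set $q^{-1}(C)$; more useful is that $q^{-1}(\interior C)$ is an open neighbourhood of $q^{-1}(x)$ on which one can try to control escape to infinity. Writing $Y=\bigcup_{i\ge 1}Y_i$ for a compact decomposition, the sets $q(Y_i)$ form a $k_\omega$-decomposition of $X$, so by hemicompactness the compact set $C$ satisfies $C\subseteq q(Y_i)$ for some fixed $i$. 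I will fix such an $i$.

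Now take a dense set $D\subseteq Y$ witnessing purity, and consider $D_i=\{d\in D: q(d)\in q(Y_i)\}$. By Lemma~3.2 (since $q$ is pure), $\overline{D_i}$ is compact, say $\overline{D_i}\subseteq Y_{d(i)}$ for some $d(i)\ge i$. The key point is that $q^{-1}(x)\cap D\subseteq D_i$ (because $x\in C\subseteq q(Y_i)$), and more importantly $q^{-1}(C)\cap D\subseteq D_i$; since $D$ is dense and $q$ restricted to $D$ is injective, this forces $q^{-1}(x)$ itself into a compact set. Concretely: if $y\in q^{-1}(x)$ is not isolated, then $y\in\overline{D}$, and I claim $y\in\overline{D_i}$ — indeed any net in $D$ converging to $y$ is eventually in $q^{-1}(\interior C)$ (an open set containing $y$), hence eventually in $D_i$. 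Together with the fact that the interior of $q^{-1}(x)$ is at most a single isolated point (condition (i) of purity), this gives $q^{-1}(x)\subseteq \overline{D_i}\cup\{\text{that isolated point}\}$, which is compact.

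For local closedness at $x$: let $W\subseteq Y$ be closed with $x\in\overline{q(W)}$; I must show $x\in q(W)$. Choose a net $(z_\alpha)$ in $W$ with $q(z_\alpha)\to x$. Passing to a subnet, I may assume the $q(z_\alpha)$ all lie in the compact neighbourhood $C$, hence in $q(Y_i)$, so for each $\alpha$ there is $y_\alpha\in Y_i$ with $q(y_\alpha)=q(z_\alpha)$. Since $Y_i$ is compact, $(y_\alpha)$ does not converge to infinity; by the purity condition (ii) applied in its contrapositive form — here I would either invoke Lemma~3.1-style reasoning or argue directly that a net approximating $z_\alpha$ by points of $D$ stays in $\overline{D_i}\subseteq Y_{d(i)}$ — it follows that $(z_\alpha)$ is frequently in the compact set $Y_{d(i)}$. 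Replacing $(z_\alpha)$ by a convergent subnet with limit $z\in Y_{d(i)}$, and using that $W$ is closed, we get $z\in W$; continuity of $q$ gives $q(z)=\lim q(z_\alpha)=x$, so $x\in q(W)$ as desired.

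The main obstacle is the step ``$(z_\alpha)$ is frequently in a compact set''. The purity hypothesis is phrased for nets lying in $D$, whereas the $z_\alpha$ lie in $W$ and need not lie in $D$. Bridging this gap is the crux: one must approximate each $z_\alpha$ (or rather its fibre) by points of $D$ and transfer the ``not escaping to infinity'' property of $(y_\alpha)\subseteq Y_i$ through the injectivity of $q|_D$ and the compactness of $\overline{D_i}$. The cleanest route is probably to apply Lemma~3.2 directly: since $q(z_\alpha)\in q(Y_i)$, every point of $D$ in the fibre $q^{-1}(q(z_\alpha))$ lies in $D_i\subseteq\overline{D_i}$, and the Fell-convergence reformulation of condition (ii) then pins the whole fibre $q^{-1}(q(z_\alpha))$ — and in particular $z_\alpha$, up to the lone possible isolated interior point — inside the fixed compact set $Y_{d(i)}$.
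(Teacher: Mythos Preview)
Your approach is correct and genuinely different from the paper's. The paper argues by contradiction: assuming either that $q$ is not locally closed at $x$ or that $q^{-1}(x)$ is non-compact, it manufactures inside every neighbourhood $U$ of $x$ an infinite closed discrete subset (by picking points $d_i\in D\cap q^{-1}(U)\setminus Y_i$ and observing that $\bigcup_i q^{-1}(q(d_i))$ is saturated and closed by condition~(ii)), contradicting the compactness of $U$. You instead argue directly via Lemma~3.2, effectively establishing that $q^{-1}(\interior C)\subseteq\overline{D_i}$, from which both conclusions drop out at once. Your route is shorter and yields the stronger by-product that $q^{-1}(\interior C)$ has compact closure; the paper's route makes the failure of countable compactness explicit, which fits better with the structural narrative of the later sections.

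One comment on exposition: the ``main obstacle'' you worry about in the local-closedness step is illusory, and the Fell-convergence detour at the end is unnecessary. The density argument you already gave for the fibre --- any net in $D$ converging to a point $y\in q^{-1}(\interior C)$ is eventually in the open set $q^{-1}(\interior C)$, hence in $D_i$, so $y\in\overline{D_i}$ --- applies verbatim to \emph{every} point of $q^{-1}(\interior C)$, not just to points of $q^{-1}(x)$. Thus once $q(z_\alpha)\in\interior C$ you have $z_\alpha\in\overline{D_i}$ immediately, and a convergent subnet with limit in $W$ finishes the argument. (The isolated-point caveat is likewise unnecessary: an isolated point of $Y$ must lie in $D$, and if it lies in $q^{-1}(\interior C)$ it already lies in $D_i$.)
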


\medskip
\begin{proof} If $Y$ is compact then $q$ is closed and $q^{-1}(x)$ is compact for all $x\in X$, so we may suppose
that $Y$ is non-compact. Let $Y=\bigcup_{i\ge 1} Y_i$ be a compact decomposition for $Y$.
Suppose first that $q$ is not locally closed at $x$.  Let $W$ be a closed subset of $Y$ such that $x\in\overline {q(W)}$
but $W$ does not meet $q^{-1}(x)$. Then for any neighbourhood $U$ of $x$, $q(W)\cap U$ is not closed in $U$,
so $W\cap q^{-1}(U)$ is not contained in any of the compact sets $Y_i$. Thus for each $i$, there exists $w_i\in (W\cap q^{-1}(U))\setminus Y_i$.
By the density of $D$, there exists $d_i\in (D\cap q^{-1}(U))\setminus Y_i$. Then the set $F=\bigcup_i q^{-1}(d_i)$ is a saturated
subset of $Y$, and $F$ is closed by condition (ii) for pure quotient maps. Hence $q(F)$ is an infinite closed discrete subset of $U$,
so $U$ is not countably compact.

Now suppose instead that $q^{-1}(x)$ is non-compact, and let $U$ be a neighbourhood of $x$. Then for each $i\ge 1$, there
exists $y_i\in q^{-1}(x)\setminus Y_i$, so by the density of $D$, there exists $d_i\in q^{-1}(U)\setminus Y_i$.
Then the set $F=\bigcup_i q^{-1}(d_i)$ is a saturated
subset of $Y$, and $F$ is closed by condition (ii) for pure quotient maps. Hence $q(F)$ is an infinite closed discrete subset of $U$,
so again $U$ is not countably compact.
\end{proof}

\bigskip
\noindent
Pure quotient maps are not uncommon. Clearly, every locally compact $\sigma$-compact Hausdorff space $X$ admits the trivial pure quotient,
taking $Y=X$ and $q$ as the identity map, and we now show that every $k_{\omega}$-space $X$ admits a pure quotient.
Specifically, we show that if $Y$ is a locally compact, $\sigma$-compact Hausdorff space, and $q:Y\to X$ a quotient map with $X$ Hausdorff, then
$q$ can be `purified': that is, a locally compact $\sigma$-compact space $Y_*$ can be derived from $Y$ such that the quotient map $q_*:Y_*\to X$ is pure
(and induces the same topology on $X$ as $q$).

\bigskip
\noindent For a topological space $X$, let $\Cl(X)$ be the hyperspace of closed subsets of $X$ with the Fell topology.
A base for the Fell topology consists of the family of all sets
$$U(K, \Phi)=\{ S\in \Cl(X): S\cap K=\emptyset,\ S\cap O\ne\emptyset,\ O\in \Phi\}$$
where $K$ is a compact subset of $X$ and $\Phi$ is a finite family of open subsets of $X$.
Then $\Cl(X)$ is a compact space \cite[Lemma 1]{Fel}, and is Hausdorff if $X$ is locally compact \cite[Theorem 1]{Fel}.
The next lemma is presumably standard.

\bigskip
\begin{lemma}\label{Lemma 3.4.} Let $X$ be a locally compact $\sigma$-compact space. Then $\Cl(X)\setminus \{\emptyset\}$ is
$\sigma$-compact.
\end{lemma}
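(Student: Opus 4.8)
The plan is to express $\Cl(X)\setminus\{\emptyset\}$ as a countable union of compact subsets of $\Cl(X)$, using a compact exhaustion of $X$. Write $X=\bigcup_{i\ge 1} X_i$ where each $X_i$ is compact and contained in the interior of $X_{i+1}$ (possible since $X$ is locally compact and $\sigma$-compact). For each $i$ set
$$\mathcal{C}_i=\{\, S\in\Cl(X): S\cap \interior(X_i)\ne\emptyset\,\}.$$
Every nonempty closed set $S$ meets some $\interior(X_i)$ (pick any point of $S$; it lies in some $X_i$, hence in $\interior(X_{i+1})$), so $\Cl(X)\setminus\{\emptyset\}=\bigcup_{i\ge1}\mathcal{C}_i$. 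Thus it suffices to show each $\mathcal{C}_i$ is compact.

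The key step is to identify $\mathcal{C}_i$ as a closed subset of the compact space $\Cl(X)$; compactness then follows immediately from Fell's result that $\Cl(X)$ is compact \cite[Lemma 1]{Fel}. So I would show $\Cl(X)\setminus\mathcal{C}_i$ is open. If $S\cap\interior(X_i)=\emptyset$, then since $X$ is locally compact Hausdorff we can choose a compact neighbourhood $K$ of the (compact) set $X_i$, i.e.\ a compact $K$ with $X_i\subseteq\interior(K)$; then $S\cap X_i=\emptyset$ need not hold, so instead work directly: the basic open set $U(\overline{\interior(X_i)},\emptyset)=\{S': S'\cap\overline{\interior(X_i)}=\emptyset\}$ is a Fell-open neighbourhood of any $S$ disjoint from the compact set $\overline{\interior(X_i)}$, but I only have $S$ disjoint from the open set $\interior(X_i)$. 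The clean fix is to use the exhaustion more carefully: replace $\mathcal{C}_i$ by $\mathcal{C}_i'=\{S: S\cap X_i\ne\emptyset\}$ and note $\Cl(X)\setminus\mathcal{C}_i'=U(X_i,\emptyset)$ is Fell-open by definition of the base (taking $K=X_i$, $\Phi=\emptyset$), so $\mathcal{C}_i'$ is closed, hence compact. Then observe $\Cl(X)\setminus\{\emptyset\}=\bigcup_i\mathcal{C}_i'$ for the same reason as before (every point of a nonempty closed $S$ lies in some $X_i$). This avoids interiors altogether.

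The main obstacle is really just making sure the set whose complement we take is cut out by a \emph{compact} set, so that it lands in the Fell base: $U(K,\Phi)$ only ranges over compact $K$, and $X_i$ is compact, so $\mathcal{C}_i'$ is genuinely closed. Once that is arranged the argument is short — I would (1) fix the compact exhaustion $\{X_i\}$, (2) define $\mathcal{C}_i'=\{S\in\Cl(X): S\cap X_i\ne\emptyset\}$ and note $\Cl(X)\setminus\mathcal{C}_i' = U(X_i,\emptyset)$ is open, so $\mathcal{C}_i'$ is a closed subset of the compact space $\Cl(X)$ and hence compact, and (3) verify $\Cl(X)\setminus\{\emptyset\}=\bigcup_{i\ge1}\mathcal{C}_i'$, since a closed set is empty iff it meets no $X_i$. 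This exhibits $\Cl(X)\setminus\{\emptyset\}$ as a countable union of compact sets, proving $\sigma$-compactness.
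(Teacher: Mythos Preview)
Your final argument (steps (1)--(3)) is correct and is essentially the paper's own proof: the paper also writes $X=\bigcup_i X_i$ with $X_i$ compact, sets $K_i=\{S\in\Cl(X):S\cap X_i\ne\emptyset\}$, and shows each $K_i$ is compact, so that $\Cl(X)\setminus\{\emptyset\}=\bigcup_i K_i$. The only cosmetic difference is that the paper proves compactness of $K_i$ by a net argument (a convergent subnet must have limit meeting $X_i$), whereas you observe directly that the complement of $K_i$ is the basic Fell-open set $U(X_i,\emptyset)$; these are the same observation in different clothing.
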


\medskip
\begin{proof} We may write $X=\bigcup_i X_i$ where each $X_i$ is compact. For each $i$, set $K_i=\{K\in \Cl(X):
K\cap X_i\ne\emptyset\}$. Let $(K_{\alpha})$ be a net in $K_i$. Then by the compactness of $\Cl(X)$, $(K_{\alpha})$ has a convergent
net $(K_{\beta})$ with limit $W$. But $W\cap X_i$ must be non-empty, since each $K_{\beta}$ meets $X_i$ and hence
$W\in K_i$. Thus $K_i$ is compact, and $\Cl(X)\setminus \{\emptyset\}=\bigcup_i K_i$ is $\sigma$-compact.
\end{proof}

\bigskip

\begin{lemma}\label {Lemma 3.5.} Let $Y$ be a locally compact $\sigma$-compact Hausdorff space and $q:Y\to X$ a quotient
map with $X$ Hausdorff. Let $\Cl(Y)$ be the hyperspace of closed subsets of $Y$ with the Fell topology, and let $Y_*$ be
the closure of the set $D=\{q^{-1}(x): x\in X\}$ in $\Cl(Y)\setminus \{\emptyset\}$. Then for each $F\in Y_*$ there exists $F'\in D$ such
that $F\subseteq F'$.
\end{lemma}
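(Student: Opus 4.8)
The plan is to analyze how a closed set $F \in Y_*$ arises as a Fell-limit of fibres $q^{-1}(x_\alpha)$, and to show that one of these fibres (or a cluster-point fibre) must contain $F$. The key mechanism is that Fell-convergence of a net $S_\alpha \to F$ forces: (a) every point of $F$ is a limit of points $s_\alpha \in S_\alpha$ (the "lower" condition, coming from the requirement $S \cap O \ne \emptyset$ for open $O$ meeting $F$), and (b) no compact set disjoint from $F$ is met cofinally by the $S_\alpha$ (the "upper" condition). So if $F$ is the Fell-limit of a net of fibres $q^{-1}(x_\alpha)$, I want to extract from the net $(x_\alpha)$ a convergent subnet in $X$, say $x_\beta \to x$, and then argue $F \subseteq q^{-1}(x)$.

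First I would fix $F \in Y_* = \overline{D}$ and choose a net $(x_\alpha)$ in $X$ with $q^{-1}(x_\alpha) \to F$ in $\Cl(Y) \setminus \{\emptyset\}$. Since $F \ne \emptyset$, pick $y \in F$; by the lower Fell condition there are $y_\alpha \in q^{-1}(x_\alpha)$ with $y_\alpha \to y$. Applying $q$ (continuous), $x_\alpha = q(y_\alpha) \to q(y) =: x$ in $X$. Now I claim $F \subseteq q^{-1}(x)$. Take any $z \in F$; again by the lower condition there are $z_\alpha \in q^{-1}(x_\alpha)$ with $z_\alpha \to z$, so $q(z_\alpha) = x_\alpha \to x$ but also $q(z_\alpha) \to q(z)$ by continuity; since $X$ is Hausdorff, $q(z) = x$, i.e. $z \in q^{-1}(x)$. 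Hence $F \subseteq q^{-1}(x) \in D$, and we take $F' = q^{-1}(x)$.

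The one genuine subtlety — and the step I expect to be the main obstacle — is justifying the lower Fell condition in the precise form "every point of the limit is approached by points of the terms," because the Fell topology's basic open sets only directly control finitely many open sets $O$ at a time, and for a \emph{net} (not a sequence) one must be careful. The clean way is: for $z \in F$ and any open neighbourhood $O$ of $z$, the basic Fell-neighbourhood $U(\emptyset, \{O\})$ of $F$ (taking $K = \emptyset$; note $\emptyset$ is compact) must eventually contain $q^{-1}(x_\alpha)$, so eventually $q^{-1}(x_\alpha) \cap O \ne \emptyset$. Then a standard diagonal/subnet argument over the directed set of neighbourhoods of $z$ (crossed with the index set of the original net) produces a subnet and points $z_\beta \in q^{-1}(x_\beta) \cap O_\beta \to z$. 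Passing to this subnet does no harm since a subnet of $q^{-1}(x_\alpha) \to F$ still converges to $F$, and a subnet of $x_\alpha \to x$ still converges to $x$. I would run this subnet extraction once at the outset (to get $x_\alpha \to x$) and then, for each fixed $z \in F$, note the argument gives $q(z) = x$ without needing to disturb the already-chosen subnet, since continuity of $q$ plus Hausdorffness of $X$ pins down the value. This keeps the proof short and avoids iterated subnet-taking.

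Finally, I should double-check that $x \in X$ actually lies in the relevant "part" of $X$ so that $q^{-1}(x)$ is a legitimate element of $D$ — but since $D$ was \emph{defined} as $\{q^{-1}(x) : x \in X\}$ and $q$ is surjective (being a quotient map onto $X$), $q^{-1}(x) \in D$ for every $x \in X$, so there is nothing extra to verify. The statement then follows with $F' := q^{-1}(x)$.
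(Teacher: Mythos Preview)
Your proposal is correct and follows essentially the same argument as the paper: pick $y\in F$, use the lower Fell condition to select $y_\alpha\in q^{-1}(x_\alpha)$ with $y_\alpha\to y$, deduce $x_\alpha\to q(y)=:x$, and then for any other $z\in F$ repeat the selection and use Hausdorffness of $X$ to get $q(z)=x$, so $F\subseteq q^{-1}(x)\in D$. The paper's proof asserts the selection $y_\alpha\to y$ directly without the subnet discussion, but your extra caution there is harmless and does not change the approach.
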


\medskip
\begin{proof} Let $(F_{\alpha})$ be a net in $D$ with limit $F\ne\emptyset$ in $\Cl(Y)$. Let $y\in F$ and set $F'=q^{-1}(q(y))$.
Then there is a net $(y_{\alpha})$ with $y_{\alpha}\in F_{\alpha}$ such that $y_{\alpha}\to y$. Hence $q(y_{\alpha})\to q(y)$. Let $y'\in F$. Then
there is another net $(y'_{\alpha})$ with $y'_{\alpha}\in F_{\alpha}$ such that $y'_{\alpha}\to y'$. Hence
$q(y_{\alpha})=q(y'_{\alpha})\to q(y')$, so $y'\in F'$. Thus $F\subseteq F'$.
\end{proof}

\bigskip
\noindent It follows from Lemma 3.5 that, in the context of the lemma, we may consistently define a map
$q_*: Y_*\to X$ by $q_*(F)=q(y)$ $(y\in F)$.

\bigskip
\begin{thm} \label {Theorem 3.6.} Let $Y$ be a locally compact $\sigma$-compact Hausdorff space and $q:Y\to X$ a quotient
map with $X$ Hausdorff. Let $\Cl(Y)$ be the hyperspace of closed subsets of $Y$ with the Fell topology, and let $Y_*$ be
the closure of the set $D=\{q^{-1}(x): x\in X\}$ in $\Cl(Y)\setminus \{\emptyset\}$. Then $Y_*$ is a locally compact,
$\sigma$-compact Hausdorff and the map $q_*: Y_*\to X$ is a pure quotient map inducing the same topology on $X$ as $q$.
\end{thm}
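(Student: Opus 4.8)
The plan is to verify the four assertions in turn: that $Y_*$ is Hausdorff, that it is locally compact, that it is $\sigma$-compact, and finally that $q_*$ is a pure quotient map inducing the same topology on $X$ as $q$. The first three are largely inherited from the ambient hyperspace. Since $Y$ is locally compact, $\Cl(Y)$ is compact Hausdorff by \cite[Theorems 1 and 2]{Fel}, hence $Y_*$, being a closed subspace of $\Cl(Y)\setminus\{\emptyset\}$ inside the compact Hausdorff $\Cl(Y)$, is Hausdorff. For local compactness: $Y_*$ is closed in $\Cl(Y)\setminus\{\emptyset\}$, which is open in the compact Hausdorff space $\Cl(Y)$ (its complement is the single closed point $\emptyset$), so $Y_*$ is locally compact. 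For $\sigma$-compactness I would invoke Lemma 3.4: $\Cl(Y)\setminus\{\emptyset\}$ is $\sigma$-compact, and $Y_*$, being closed in it, is $\sigma$-compact as well (write $Y_*=\bigcup_i(Y_*\cap K_i)$ with the $K_i$ from the proof of Lemma 3.4, each intersection being compact).

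Next I would show $q_*$ is continuous and that it induces the original topology on $X$. The key point is that the map $\iota:Y\to Y_*$ sending $y\mapsto q^{-1}(q(y))$ need not be well-behaved, so instead I would work with the natural identification: $D=\{q^{-1}(x):x\in X\}$ is in bijective correspondence with $X$ via $q^{-1}(x)\leftrightarrow x$, and on $D$ this bijection is a homeomorphism onto $X$ — this is essentially the statement that the Fell topology restricted to the graph of $x\mapsto q^{-1}(x)$ agrees with the quotient topology, which follows because $q$ is a quotient map and convergence $q^{-1}(x_\alpha)\to q^{-1}(x)$ in $\Cl(Y)$ forces $x_\alpha\to x$ in $X$ (if $x_\alpha\not\to x$ then some subnet avoids a neighbourhood $V$ of $x$; pulling back, $q^{-1}(x)\cap q^{-1}(V)$ contains an open set meeting $q^{-1}(x)$ which is eventually missed, contradicting Fell convergence — one uses local compactness of $Y$ to find a compact neighbourhood inside $q^{-1}(V)$ of a point of $q^{-1}(x)$). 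For continuity of $q_*$ on all of $Y_*$: if $F_\alpha\to F$ in $Y_*$, pick $y\in F$; by the Fell topology there are $y_\alpha\in F_\alpha$ with $y_\alpha\to y$, so $q_*(F_\alpha)=q(y_\alpha)\to q(y)=q_*(F)$. Surjectivity is clear since $D\subseteq Y_*$. To see $q_*$ is a quotient map, it suffices to note that $q=q_*\circ\iota$ where $\iota:Y\to Y_*$, $y\mapsto q^{-1}(q(y))$, need only be shown continuous — which follows since $y_\alpha\to y$ implies $q^{-1}(q(y_\alpha))\to q^{-1}(q(y))$ in $\Cl(Y)$: any open $O$ meeting $q^{-1}(q(y))$ eventually meets $q^{-1}(q(y_\alpha))$ because $q^{-1}(q(y))\subseteq\overline{q^{-1}(q(y_\alpha))\text{-neighbourhood}}$... actually the cleaner route is that $\iota$ factors the quotient map $q$ through $q_*$ and $q_*$ is continuous, so a set $A\subseteq X$ with $q_*^{-1}(A)$ open has $q^{-1}(A)=\iota^{-1}(q_*^{-1}(A))$ open, whence $A$ is open since $q$ is quotient; thus $q_*$ is a quotient map and induces the same topology.

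Finally, and this is where I expect the real work to lie, I would prove that $q_*$ is pure, using the criterion of Lemma 3.2(b). Take $D_*:=D\subseteq Y_*$, which is dense by construction and on which $q_*$ is injective, giving condition (i). For condition (ii), fix a compact decomposition $Y_*=\bigcup_i Z_i$ and consider $(D_*)_i=\{q^{-1}(x)\in D: q_*(q^{-1}(x))\in q_*(Z_i)\}=\{q^{-1}(x): x\in q_*(Z_i)\}$. I must show this set has compact closure in $Y_*$. Since $q_*(Z_i)$ is a compact subset of $X$, set $C_i=q_*(Z_i)$ and consider the family $\mathcal F_i=\{q^{-1}(x):x\in C_i\}$; I want $\overline{\mathcal F_i}$ compact in $Y_*$, equivalently (as $Y_*$ is closed in $\Cl(Y)$) that $\overline{\mathcal F_i}\subseteq\Cl(Y)$ omits $\emptyset$. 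So the heart of the matter is: \emph{if $x_\alpha\in C_i$ and $q^{-1}(x_\alpha)\to F$ in $\Cl(Y)$, then $F\ne\emptyset$.} This is exactly where $Y_*$'s construction pays off: passing to a subnet, $x_\alpha\to x\in C_i$ (compactness of $C_i$), and then — here I would argue that the fibres over a convergent net in $X$ cannot escape to $\emptyset$ in the Fell topology — indeed if $q^{-1}(x_\alpha)\to\emptyset$ then for every compact $K\subseteq Y$ eventually $q^{-1}(x_\alpha)\cap K=\emptyset$, i.e. $x_\alpha\notin q(K)$ eventually; but by hemicompactness of $X$, $x=\lim x_\alpha$ lies in the interior of some $q(Y_j)$ (using a $k_\omega$-decomposition), and then $x_\alpha\in q(Y_j)$ frequently with $Y_j$ compact, contradiction. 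Hence $F\ne\emptyset$, so $\overline{\mathcal F_i}\subseteq Y_*$ is compact, giving condition (ii). The main obstacle is organising this last compactness argument cleanly — in particular making sure the Fell-topology bookkeeping (``eventually meets every open set meeting $F$, eventually misses every compact set missing $F$'') is handled correctly when $F$ could a priori be empty, and correctly invoking hemicompactness; once that is in place, Lemma 3.2 delivers purity immediately.
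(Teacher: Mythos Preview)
Your argument that $q_*$ is a quotient map has a genuine gap. You write that ``a set $A\subseteq X$ with $q_*^{-1}(A)$ open has $q^{-1}(A)=\iota^{-1}(q_*^{-1}(A))$ open'', but this inference requires $\iota:Y\to Y_*$, $y\mapsto q^{-1}(q(y))$, to be continuous, and it is not. Fell convergence $q^{-1}(q(y_\alpha))\to q^{-1}(q(y))$ demands that every open set meeting $q^{-1}(q(y))$ eventually meet $q^{-1}(q(y_\alpha))$; if $y'\ne y$ is another point of the fibre $q^{-1}(q(y))$, a small neighbourhood of $y'$ need not be hit by fibres near $y$. Concretely, take $Y=[0,1]\cup[2,3]$ with $q$ identifying $0$ and $2$: then $1/n\to 0$ in $Y$ but $q^{-1}(q(1/n))=\{1/n\}$ never meets $(2-\varepsilon,2+\varepsilon)$, so it does not Fell-converge to $\{0,2\}=q^{-1}(q(0))$. (For the same reason your earlier claim that $D$ is homeomorphic to $X$ fails: $q_*|_D$ is a continuous bijection, but its inverse $x\mapsto q^{-1}(x)$ is generally discontinuous.)

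The paper sidesteps this by proving directly that $q_*^{-1}(C)$ is closed in $Y_*$ if and only if $q^{-1}(C)$ is closed in $Y$. The nontrivial direction is $q_*^{-1}(C)$ closed $\Rightarrow$ $q^{-1}(C)$ closed: given a net $y_\alpha\to y$ with $y_\alpha\in q^{-1}(C)$, set $F_\alpha=q^{-1}(q(y_\alpha))\in D$. Since $y_\alpha$ eventually lies in a compact neighbourhood of $y$, the net $(F_\alpha)$ does not go to infinity in $Y_*$, so some subnet converges to an $F\in Y_*$; one checks $y\in F$ and $F\in q_*^{-1}(C)$, whence $q(y)=q_*(F)\in C$. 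The point is that one only needs a \emph{subnet} of $(F_\alpha)$ to converge \emph{somewhere} in $Y_*$, not that the whole net converge to $q^{-1}(q(y))$.

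Two smaller remarks. Your purity argument via Lemma~3.2(b) is salvageable but the line ``$x$ lies in the interior of some $q(Y_j)$'' is not what hemicompactness gives; simply use that $C_i=q_*(Z_i)$ is compact, so $C_i\subseteq q(Y_j)$ for some $j$, and hence every $q^{-1}(x_\alpha)$ meets $Y_j$. The paper's route is shorter still: condition~(ii) of the \emph{definition} of purity follows in one line from Lemma~3.5, since every $e_\alpha\in q_*^{-1}(x_\alpha)$ satisfies $e_\alpha\subseteq q^{-1}(x_\alpha)=d_\alpha$ as subsets of $Y$, and $d_\alpha\to\infty$ in $Y_*$ means precisely that $d_\alpha$ eventually misses each $Y_i$, forcing the same for $e_\alpha$.
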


\medskip
\begin{proof} It follows from Lemma 3.4 that $Y_*$ is $\sigma$-compact.

Let $C$ be a subset of $X$. We must show that $q_*^{-1}(C)$ is closed if and only if $q^{-1}(C)$ is closed. Suppose first
that $q^{-1}(C)$ is closed, and let $F\in Y_*$ with $q_*(F)=y\notin C$. Then the complement of $q^{-1}(C)$ is an open
neighbourhood containing $F=q^{-1}(x)$, and thus no net in $q_*^{-1}(C)$ can converge to $F$.
Thus $q_*^{-1}(C)$ is closed.
Conversely, suppose that $q_*^{-1}(C)$ is closed. Let $(y_{\alpha})$ be a net in $q^{-1}(C)$ with limit $y$. Set $F_{\alpha}=q_*^{-1}(q(y_{\alpha}))$.
Then $F_{\alpha}\not\to\infty$, so by local compactness of $Y_*$ there exists a convergent subnet of $(F_{\alpha})$ with limit $F\in Y_*$, and $y\in F$. Then $F\in
q_*^{-1}(C)$ by assumption, and $q_*(F)=q(y)\in C$. Thus $y\in q^{-1}(C)$ as required.

Finally, it is immediate that the restriction of $q_*$ to $D$ is injective, while condition (ii) of pure quotient maps follows from Lemma 3.5.
\end{proof}

\bigskip
\noindent Note that  $q_*^{-1}(x)$ is the set consisting of  $q^{-1}(x)$ and  a family of closed subsets of the boundary of $q^{-1}(x)$, obtained as limits of nets of fibres;
see Lemma 3.5. (Thus in the example after Proposition 2.2, $D$ is homeomorphic to ${\R}$ and is closed in $\Cl(Y)\setminus \{\emptyset\}$).
If the boundary of $q^{-1}(x)$ is compact then $q_*^{-1}(x)$ is also compact because $Y_*$ is the closure of $D$ in $\Cl(Y)\setminus \{\emptyset\}$.
On the other hand, if the boundary of $q^{-1}(x)$ is non-compact then a net
of its closed subsets might converge to $\emptyset$ in the Fell topology, so $q_*^{-1}(x)$ might be compact or non-compact.

Apart from the fact that its fibres are more likely to be compact (or to have compact boundary) than those of $q$, a further
advantage that $q_*$ has over $q$ is that it is more likely to be locally closed.

\bigskip

\begin{prop}\label {Proposition 3.7.} Let $Y$ be a locally compact, $\sigma$-compact Hausdorff space and $q:Y\to X$ a quotient map
with $X$ Hausdorff. Let $q_*$ be the pure quotient constructed in Theorem 3.6, and let $x\in X$. If $q$
is locally closed at $x$, then so is $q_*$.
\end{prop}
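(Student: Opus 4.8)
The plan is to show that if $q_*$ fails to be locally closed at $x$, then so does $q$. So suppose there is a closed set $\mathcal{W}\subseteq Y_*$ with $x\in\overline{q_*(\mathcal{W})}$ but $\mathcal{W}\cap q_*^{-1}(x)=\emptyset$. I want to manufacture from $\mathcal{W}$ a closed set $W\subseteq Y$ witnessing the failure of local closedness of $q$ at $x$. The natural candidate is $W:=\overline{\bigcup\{F: F\in\mathcal{W}\}}$, the closure in $Y$ of the union of all the closed sets (= limits of fibres) making up $\mathcal{W}$. Since each $F\in\mathcal{W}$ is contained in a fibre $F'=q^{-1}(q_*(F))$ by Lemma 3.5, and $q_*(F)\in q_*(\mathcal{W})$, the set $\bigcup_{F\in\mathcal{W}} F$ is contained in the saturated set $q^{-1}(q_*(\mathcal{W}))$; so $q(W)\subseteq\overline{q_*(\mathcal{W})}$, and in particular $x\in\overline{q(W)}$.

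The two things that then need checking are: (1) $W$ does not meet $q^{-1}(x)$, and (2) $W$ is closed in $Y$ — or rather, since $W$ is closed by construction, what actually needs proving is that $q(W)$ ``sees'' $x$ only in its closure and not in $q(W)$ itself, i.e. $W\cap q^{-1}(x)=\emptyset$. For (1): if some $y\in W$ had $q(y)=x$, then $y$ is a limit of points $y_\alpha\in F_\alpha$ with $F_\alpha\in\mathcal{W}$; passing to a convergent subnet of $(F_\alpha)$ in the compact space $\Cl(Y)$ (using that $(F_\alpha)$ does not escape to $\emptyset$, since it has a non-escaping selection $y_\alpha\to y$), the limit $F\in Y_*$ contains $y$, hence $q_*(F)=q(y)=x$; and $F\in\mathcal{W}$ because $\mathcal{W}$ is closed. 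This contradicts $\mathcal{W}\cap q_*^{-1}(x)=\emptyset$. So $W\cap q^{-1}(x)=\emptyset$, and together with $x\in\overline{q(W)}$ this shows $q$ is not locally closed at $x$, as required.

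The step I expect to be the main obstacle is the subnet-limit argument in (1): one must be careful that the net $(F_\alpha)$ of members of $\mathcal{W}$ selected so that $y_\alpha\in F_\alpha$ and $y_\alpha\to y$ really does have a subnet converging to a \emph{nonempty} limit $F$ — which it does, because any limit must contain the cluster point $y$ of $(y_\alpha)$ by the definition of the Fell topology (the sets $F_\beta$ all meet every neighbourhood of $y$ frequently). Once $F\ne\emptyset$ is secured, $F\in Y_*$ and $F\in\mathcal{W}$ (closedness of $\mathcal{W}$), and $q_*(F)=q(y)$ by the definition of $q_*$ via Lemma 3.5. A secondary point to handle cleanly is the inclusion $q(W)\subseteq\overline{q_*(\mathcal{W})}$: this follows because continuity of $q$ gives $q(W)=q\big(\overline{\bigcup F}\big)\subseteq\overline{q\big(\bigcup F\big)}=\overline{\bigcup_{F\in\mathcal{W}} q(F)}$, and each $q(F)=\{q_*(F)\}\subseteq q_*(\mathcal{W})$ by Lemma 3.5 together with the definition of $q_*$. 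Everything else is routine manipulation of saturations and the Fell topology.
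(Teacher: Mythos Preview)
Your approach is essentially the paper's own, just run in the contrapositive direction: the paper takes a closed $C\subseteq Y_*$ disjoint from $q_*^{-1}(x)$, sets $W=\bigcup_{K\in C}K$, and uses exactly your Fell-topology subnet argument to show $W$ is already closed in $Y$ (so no closure is needed); then $q(W)=q_*(C)$ exactly, and local closedness of $q$ at $x$ gives $x\notin\overline{q_*(C)}$. Your step (1) is, in effect, a proof that $\bigcup_{F\in\mathcal W}F$ is closed, so the closure in your definition of $W$ is harmless but unnecessary.

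There is one slip to fix. From ``$q(W)\subseteq\overline{q_*(\mathcal W)}$'' and ``$x\in\overline{q_*(\mathcal W)}$'' you \emph{cannot} conclude ``$x\in\overline{q(W)}$''; the inclusion points the wrong way for that. What you actually need (and what is immediate) is the reverse containment $q_*(\mathcal W)\subseteq q(W)$: for each $F\in\mathcal W$ pick any $y\in F$, so that $q_*(F)=q(y)\in q\bigl(\bigcup_{F'\in\mathcal W}F'\bigr)\subseteq q(W)$. Hence $\overline{q_*(\mathcal W)}\subseteq\overline{q(W)}$ and $x\in\overline{q(W)}$ as desired. With this correction your argument goes through and matches the paper's.
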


\medskip
\begin{proof} Suppose that $q$ is locally closed at $x$ and let $C$ be a closed subset of $Y_*$ disjoint from
$q_*^{-1}(x)$. Set $W=\bigcup_{K\in C}K$. Then $W$ is disjoint from $q^{-1}(x)$. Let $(y_{\alpha})$ be a net
in $W$ with limit $y\in Y$. Then there is a net $(K_{\alpha})$ in $C$ with $y_{\alpha}\in K_{\alpha}$ for each
$\alpha$, and $K_{\alpha}\not\to\infty$, so by the local compactness of $Y_*$ there is a net
$(K_{\beta})$ converging to some $K$, which belongs to $C$ since $C$ is closed. Since $y\in K$, it
follows that $y\in W$, and hence that $W$ is closed. Since $q(W)=q_*(C)$, it follows that $x\notin \overline{q_*(C)} $.
\end{proof}

\bigskip\noindent In particular, if $q$ is closed then $q_*$ is closed. Note, however, that in this case
if the boundary of $q^{-1}(x)$ is non-compact then $x$ does not have a compact neighbourhood
in $X$ by Theorem 2.1, and hence $q_*^{-1}(x)$ is non-compact by Proposition 2.2 applied
to $q_*$.
\bigskip

\noindent Proposition 3.7 raises the question of whether $q_{**}$ might be locally closed at even more
points of $X$ than $q_*$ is, but we will see in the next section that $q_*$ already reaches the limit.

\bigskip

\section {An extension of Morita's theorem}

\bigskip

\noindent In this section, we use pure quotient maps to extend Morita's theorem to general $k_{\omega}$-spaces.

\medskip

Given a quotient map $q$ from a locally compact $\sigma$-compact Hausdorff space $Y$ onto
a Hausdorff space $X$, we partition $X$ into three sets as follows:

(i) $L_q=\{x\in X: \hbox{ $q$ is locally closed at $x$ and $q^{-1}(x)$ has compact boundary}\}$;

(ii) $F_q=\{ x\in X: \hbox{ $q$ is locally closed at $x$ and $q^{-1}(x)$ has non-compact boundary}\}$;

(iii) $N_q=\{ x\in X: \hbox{ $q$ is not locally closed at $x$}\}$.

\bigskip
\noindent We saw in Proposition 2.2 that if $x\in L_q$ then $x$ has a compact neighbourhood in $X$, and in Theorem 3.3 that if $q$ is a pure
quotient map then $L_q$ is precisely the set of points in $X$ which have a compact neighbourhood. We now show that if
$q$ is a pure quotient map, the sets $F_q$ and $N_q$ can similarly be characterised in terms of the topology of $X$.

\bigskip

\begin{thm} \label {Theorem 4.1.} Let $Y$ be a locally compact $\sigma$-compact Hausdorff space and $q: Y\to X$ a pure quotient map with $X$ Hausdorff.
Then $q$ is locally closed at $x\in X$ if and only if $x$ is a $k$-point.
\end{thm}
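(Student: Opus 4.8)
The plan is to prove both implications, noting that the harder direction is ``$k$-point $\Rightarrow$ locally closed'', since the reverse implication is almost immediate from the definitions. For the easy direction, suppose $q$ is locally closed at $x$. Then $q$ is pseudo-open at $x$ (as remarked after the definition of pseudo-open maps in Section 2), and so by Theorem 2.3 the point $x$ is a $k$-point. So it remains to establish that if $x$ is a $k$-point then $q$ is locally closed at $x$, and here the purity of $q$ is essential (the example after Proposition 2.2 shows the implication fails for general quotient maps).

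For the main direction, I would argue by contradiction. Fix a compact decomposition $Y=\bigcup_{i\ge 1} Y_i$ and a dense set $D\subseteq Y$ witnessing purity, and suppose $x$ is a $k$-point but $q$ is not locally closed at $x$. Then there is a closed set $W\subseteq Y$ with $x\in\overline{q(W)}\setminus q(W)$, so $W$ misses $q^{-1}(x)$. As in the proof of Theorem 3.3, for each $i$ the set $W\cap q^{-1}(U)$ escapes $Y_i$ for any neighbourhood $U$ of $x$ (because $q(W)\cap U$ fails to be closed in $U$), so one can pick $w_i\in W\cap q^{-1}(U)$ outside $Y_i$ and then, by density of $D$, points $d_i\in D\cap q^{-1}(U)$ outside $Y_i$; then $F:=\bigcup_i q^{-1}(d_i)$ is saturated and closed by condition (ii) for pure quotient maps, and $q(F)$ is an infinite closed discrete subset of $U$. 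The point is that this construction can be carried out inside any given neighbourhood $U$ of $x$. Now invoke the $k$-point property: since $x\in\overline{q(W)}$ there is a compact set $K\subseteq X$ with $x\in\overline{q(W)\cap K}$. Taking $U$ to be an open set whose intersection with $q(W)$ still accumulates at $x$, run the above construction using the set $q(W)\cap K$ in place of $q(W)$: the $w_i$ can now be chosen with $q(w_i)\in K$, hence (by hemicompactness, picking compact $L\subseteq Y$ with $q(L)\supseteq K$ and density of $D$) the $d_i$ can be chosen so that the closed discrete set $q(F)$ lies in the compact set $K$ — a contradiction, since an infinite closed discrete subset of a compact space cannot exist.

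I expect the main obstacle to be arranging the bookkeeping in the last step so that the $k$-point witness $K$ is genuinely exploited: one must ensure that when passing to the set $q(W)\cap K$ the failure of local closedness is still visible near $x$ (i.e.\ $x\in\overline{q(W)\cap K}$ but $x\notin q(W)\cap K$, with the relevant parts escaping to infinity in $Y$), and then that the fibres $q^{-1}(d_i)$ chosen over $K$ have their $q$-images trapped in $K$. The compactness of $\overline{q^{-1}(K)}$-type sets is not available (that is exactly what can fail), so the argument must instead route through hemicompactness: choose compact $L\subseteq Y$ with $K\subseteq q(L)$, observe that the $w_i$ with $q(w_i)\in K$ have preimages meeting $L$, and use purity (condition (ii), or equivalently Lemma 3.2(ii)) to force the $d_i$-fibres to also meet a fixed compact set, contradicting $d_i\to\infty$. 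Once this is set up correctly the contradiction with compactness of $K$ is straightforward.
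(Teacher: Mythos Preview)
Your easy direction is fine and matches the paper. The hard direction, however, has a genuine gap: the two requirements you impose on the $d_i$ are incompatible, so the contradiction never gets off the ground.

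Concretely, you want $d_i\in D$ with $d_i\to\infty$ \emph{and} $q(d_i)\in K$. But Lemma~3.2(ii) says precisely that $\{d\in D: q(d)\in q(Y_j)\}$ has compact closure; since $K\subseteq q(Y_j)$ for some $j$ by hemicompactness, any $d_i\in D$ with $q(d_i)\in K$ lies in that compact closure and cannot tend to infinity. Density of $D$ near $w_i$ only gives you $d_i\notin Y_i$; it gives no control whatsoever over $q(d_i)$, and invoking the compact $L$ with $q(L)\supseteq K$ does not help, because purity applies to points of $D$, not to the $w_i$. Your fallback in the last paragraph (``force the $d_i$-fibres to meet a fixed compact set, contradicting $d_i\to\infty$'') is the same obstruction read backwards: it shows the $d_i$ you need cannot exist, not that a chosen sequence yields a contradiction.

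The paper's argument avoids this trap by running the contrapositive directly and using purity in the forward direction. Starting from a closed $N$ with $x\in\overline{q(N)}\setminus q(N)$, it thickens $N$ to a closed $W$ (still missing $q^{-1}(x)$) with $N\subseteq\operatorname{int} W$, and sets $E:=q(W\cap D)$. Density gives $\overline{W\cap D}\supseteq N$, so $x\in\overline{E}$. For any compact $K_i=q(Y_i)$, Lemma~3.2(ii) yields $q^{-1}(E\cap K_i)\subseteq Y_{d(i)}$, hence $E\cap K_i\subseteq q(Y_{d(i)}\cap W)$, a compact set not containing $x$. Thus $x\notin\overline{E\cap K}$ for every compact $K$, and $x$ is not a $k$-point. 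The key move you are missing is to let $E$ itself be built from $D$, so that Lemma~3.2 bounds $E\cap K$ rather than obstructing the construction.
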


\bigskip

\begin{proof} We have seen that if $q$ is locally closed at $x$ then it is
pseudo-open at $x$, and hence $x$ is a $k$-point by Theorem 2.3.

Conversely, suppose that $q$ is not locally closed at $x$, and let $N$ be a closed subset of $Y$
disjoint from $q^{-1}(x)$ with $x\in\overline{q(N)}$.
Let $W$ be a closed subset of $Y$ disjoint from $q^{-1}(x)$ with $N$ in its interior. Let $D$ be the dense subset
of $Y$ from the pure quotient property, and set $M=W\cap D$ and $E=q(M)$. Then
$\overline M\supseteq N$, so $x\in\overline {E}\setminus E$. Let $Y=\bigcup_i Y_i$ be a compact decomposition
for $Y$ and set $K_i=q(Y_i)$. Then for any $i$, $q^{-1}(K_i\cap E)=\{ d\in M: q(d)\in K_i\}$ so
$q^{-1}(K_i\cap E)\subseteq Y_{d(i)}$ (in the terminology introduced after Lemma 3.2). Thus
$$K_i\cap E\subseteq q(Y_{d(i)}\cap M)\subseteq q(Y_{d(i)}\cap \overline M)\subseteq q(Y_{d(i)}\cap W) $$ and this latter
set is compact and does not contain $x$. Since every compact subset of $X$ is contained in some $K_i$ by hemicompactness,
it follows that there does not exist compact $K$ with $x\in\overline{E\cap K}$. Hence $x$ is not a $k$-point.
\end{proof}

\bigskip

\noindent Thus if $q$ is a pure quotient map, $q$ is locally closed at $x$ if and only if $q$ is pseudo-open at $x$. The following corollary is immediate from
Theorem 4.1.

\bigskip

\begin{cor}\label {Corollary 4.2.} Let $Y$ be a locally compact $\sigma$-compact Hausdorff space and $q: Y\to X$ a pure quotient map with $X$ Hausdorff.
Let $x\in X$. Then

(i) $x\in L_q$ if and only if $x$ has a compact neighbourhood in $X$;

(ii) $x\in F_q$ if and only if $x$ is a $k$-point without a compact neighourhood in $X$;

(iii) $x\in N_q$ if and only if $x$ is not a $k$-point.
\end{cor}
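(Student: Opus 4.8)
The plan is to read off all three equivalences from the partition $X=L_q\cup F_q\cup N_q$ together with Proposition 2.2, Theorem 3.3, and Theorem 4.1; there is no serious obstacle here, so the work is purely one of assembling the pieces in the right order. I would begin with part (iii), which is nothing but the contrapositive form of Theorem 4.1: $x\in N_q$ means exactly that $q$ is not locally closed at $x$, and by Theorem 4.1 (which applies since $q$ is pure) this holds precisely when $x$ fails to be a $k$-point.

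Next I would treat part (i). For the forward implication, if $x\in L_q$ then $q$ is locally closed at $x$ and the boundary of $q^{-1}(x)$ is compact, so Proposition 2.2 — whose hypotheses are met, $Y$ being locally compact Hausdorff and $q$ a continuous surjection onto the Hausdorff space $X$ — yields a compact neighbourhood of $x$ in $X$. For the reverse implication, if $x$ has a compact neighbourhood in $X$ then Theorem 3.3, using purity of $q$, gives that $q$ is locally closed at $x$ and that $q^{-1}(x)$ is compact; a compact set has compact boundary, so $x\in L_q$. Here I would recall the observation from Section 3 that for a pure quotient map the fibre $q^{-1}(x)$, having interior at most a single point, is compact if and only if its boundary is compact, so the two formulations of the defining condition of $L_q$ describe the same set.

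Finally, part (ii) follows by elimination. Since $L_q$, $F_q$, $N_q$ partition $X$, and since by part (iii) the $k$-points of $X$ are exactly the points not lying in $N_q$, the $k$-points are precisely $L_q\cup F_q$; combining this with the characterisation of $L_q$ in part (i), the set $F_q$ consists exactly of the $k$-points of $X$ having no compact neighbourhood. The only point needing a moment's care — the closest thing to an obstacle — is that among the $k$-points the conditions ``no compact neighbourhood'' and ``the boundary of $q^{-1}(x)$ is non-compact'' really do single out the same points; but this is immediate from part (i) and Theorem 3.3, since a $k$-point with a compact neighbourhood lies in $L_q$ and hence has compact fibre, while one in $F_q$ has non-compact fibre boundary and so, again by Theorem 3.3, cannot have a compact neighbourhood.
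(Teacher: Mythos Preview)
Your proposal is correct and follows essentially the same route as the paper, which simply declares the corollary ``immediate from Theorem~4.1'' after having already noted (just before the statement) that Proposition~2.2 and Theorem~3.3 identify $L_q$ with the set of points having a compact neighbourhood. You have merely spelled out explicitly the assembly of Proposition~2.2, Theorem~3.3, and Theorem~4.1 together with the partition $X=L_q\cup F_q\cup N_q$, which is exactly what the paper intends.
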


\bigskip
\noindent In particular, for any quotient map with domain a locally compact, $\sigma$-compact Hausdorff space and
Hausdorff range, $N_{q_{**}}=N_{q_*}$, answering the question raised in Section 3. It follows from Corollary 4.2 that for a $k_{\omega}$-space $X$,
the sets $L_{q_*}$, $F_{q_*}$, and $N_{q_*}$ are independent
of the original quotient map $q$, and in future we may use this notation even when there is no particular original quotient map in mind.

\bigskip
\noindent The next corollary is also immediate from Theorem 4.1 (applied to $q_*$) and Theorem 2.3 (applied to $q$). Recall that $H_q$ denotes the
set of points of $X$ at which $q$ is pseudo-open.

\bigskip
\begin{cor} \label {Corollary 4.3.} Let $Y$ be a locally compact $\sigma$-compact Hausdorff space and $q: Y\to X$ a quotient map with $X$ Hausdorff.
For $x\in X$, $x\in H_q$ if and only if $q_*$ is locally closed at $x$. Hence $q$ is pseudo-open if and only if $q_*$ is closed.
\end{cor}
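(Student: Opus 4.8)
The plan is to chain two earlier results through the observation that being a $k$-point is a property of the topological space $X$ alone, independent of the particular quotient map presenting it. First I would record that, by Theorem 3.6, $Y_*$ is again locally compact, $\sigma$-compact and Hausdorff, and that $q_*:Y_*\to X$ induces on $X$ the same topology as $q$; hence the set of $k$-points of $X$ is the same whether $X$ is regarded as the quotient of $Y$ by $q$ or of $Y_*$ by $q_*$.

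Next I would apply Theorem 2.3 to $q:Y\to X$, obtaining that $x\in H_q$ (that is, $q$ is pseudo-open at $x$) if and only if $x$ is a $k$-point of $X$. Then I would apply Theorem 4.1 to the pure quotient map $q_*:Y_*\to X$ --- legitimate since $q_*$ is pure by Theorem 3.6 --- obtaining that $q_*$ is locally closed at $x$ if and only if $x$ is a $k$-point of $X$. Combining these two equivalences gives $x\in H_q$ if and only if $q_*$ is locally closed at $x$, which is the first assertion.

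For the final sentence I would note that $q$ is pseudo-open exactly when it is pseudo-open at every point of $X$, i.e. when $H_q=X$; by the equivalence just established this holds exactly when $q_*$ is locally closed at every point of $X$; and since $q_*$ is surjective, the observation in Section 2 that a surjective map is closed if and only if it is locally closed at every point shows this is in turn equivalent to $q_*$ being closed. Hence $q$ is pseudo-open if and only if $q_*$ is closed.

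There is no genuine obstacle here: the whole of the content is supplied by Theorems 2.3, 3.6 and 4.1. The only step that deserves a moment's care is the one flagged in the first paragraph --- that the $k$-point dichotomy is intrinsic to $X$ and so transfers freely between the two presentations of $X$ as a quotient --- after which the corollary drops out immediately.
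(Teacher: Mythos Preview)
Your proof is correct and follows exactly the paper's approach: the paper states that the corollary is immediate from Theorem~4.1 applied to $q_*$ and Theorem~2.3 applied to $q$, with the $k$-point property of $X$ serving as the common bridge. Your additional remarks on why the $k$-point notion is intrinsic to $X$ and why the global statement follows from the pointwise one are accurate and make explicit what the paper leaves implicit.
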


\bigskip
\noindent We turn now to consider the topology of the sets $L_{q}$, $F_{q}$, and $N_{q}$ when $q$ is pure (in which case
$H_q=L_q\cup F_q$ by Corollary 4.3). It follows from Corollary 4.2 that $L_q$ is open, and hence that $F_q\cup N_q$ is closed in $X$.

Recall that a point $x$ in a Tychonoff space $X$ is a P-point if the intersection of any countable family of neighbourhoods
of $x$ is a neighbourhood of $x$. If every point is a P-point then $X$ is a P-space. If $U$ is an open subset of $X$ and $x$ is a P-point in
$U$ then $x$ is P-point in $X$. Conversely, a P-point in $X$ is a P-point in any subspace
of $X$, and hence the set of P-points of $X$ forms a P-space. If $X$ is a compact P-space then $X$ is finite \cite[4K]{GJ}.
The next theorem shows that $F_q$ is contained
in the set of P-points of $F_q\cup N_q$. For this we need the following observation.

Let $Y$ be a locally compact $\sigma$-compact Hausdorff space and $q: Y\to X$ a quotient map with $X$ Hausdorff.
Then $q$ extends to a continuous map $\overline q: \beta Y\to \beta X$, and  $\overline q$ is continuous and surjective from
a compact Hausdorff space, and is therefore also a quotient map. If $x\in X$ and $q$ is not locally closed at $x$, then there is a closed set
$N\subseteq Y$ disjoint from $q^{-1}(x)$ such that $x\in \overline{q(N)}$. Hence there is a point $y\in \overline{N}^{\beta Y}$
such that $\overline q(y)=x$.

\bigskip

\begin{lemma}\label {Lemma 4.4.} Let $Y$ be a locally compact $\sigma$-compact Hausdorff space and $q: Y\to X$ a pure quotient map with $X$ Hausdorff.
Let $x\in N_q$ and let $y\in \beta Y\setminus Y$ such that $\overline q(y)=x$. Let $V$ be any neighbourhood of $x$ in $X$.
Then $Y$ has a zero set $Z\subseteq q^{-1}(V)$ with $y\in \overline{Z}^{\beta Y}$.
\end{lemma}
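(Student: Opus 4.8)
The goal is: given $x \in N_q$, a point $y \in \beta Y \setminus Y$ with $\overline q(y) = x$, and a neighbourhood $V$ of $x$ in $X$, to produce a zero set $Z \subseteq q^{-1}(V)$ of $Y$ with $y \in \overline Z^{\beta Y}$. The natural strategy is to approximate $y$ from inside $q^{-1}(V)$ using the density set $D$ from the pure quotient property, in a way controlled enough to be captured by a single zero set. First I would fix a compact decomposition $Y = \bigcup_i Y_i$ and recall the function $i \mapsto d(i)$ from the remark after Lemma 3.2, so that $D_i := \{d \in D : q(d) \in q(Y_i)\} \subseteq Y_{d(i)}$. Since $x \in N_q$, the argument preceding the lemma (or the proof of Theorem 4.1) gives a closed set $W \subseteq Y$, disjoint from $q^{-1}(x)$, with $x \in \overline{q(W \cap D)}$; shrinking $W$ to lie in $q^{-1}(V)$ we may assume $W \subseteq q^{-1}(V)$. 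Set $M = W \cap D$ and $E = q(M)$, so $x \in \overline E \setminus E$ and, exactly as in Theorem 4.1, $q(Y_i) \cap E \subseteq q(Y_{d(i)} \cap W)$, a compact set not containing $x$.

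The next step is to build a countable subset $\{d_n\} \subseteq M$ and a corresponding zero set. Since $\overline q(y) = x$, every neighbourhood of $y$ in $\beta Y$ meets $q^{-1}$ of every neighbourhood of $x$; combined with $x \in \overline E$, I want to choose $d_n \in M$ escaping to infinity in $Y$ (using the estimate $q(Y_i)\cap E \subseteq q(Y_{d(i)}\cap W)$, which forces the $q$-fibres over $E \cap q(Y_i)^c$ to avoid a fixed compact set) and accumulating at $y$ in $\beta Y$. Concretely, because $Y$ is locally compact $\sigma$-compact, hence normal, I can take a sequence $d_n \in M$ with $d_n \notin Y_n$ and with $q(d_n) \to x$; the set $\{d_n : n \ge 1\}$ is discrete and closed in $Y$ (it escapes every $Y_i$ by construction, using purity as in Theorem 3.3 to see the associated fibres are closed), so by normality there is a continuous $f : Y \to [0,1]$ with $f(d_n) = 0$ for all $n$ and $f$ bounded away from $0$ off a neighbourhood of $\{d_n\}$ contained in $q^{-1}(V)$. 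Take $Z = f^{-1}(0)$, a zero set contained in $q^{-1}(V)$ and containing all $d_n$. It remains to check $y \in \overline Z^{\beta Y}$: since $\overline q$ is continuous with $\overline q(y) = x$ and the $q(d_n)$ converge to $x$, and $y \notin Y$ while the $d_n$ escape to infinity, a suitable subnet of $(d_n)$ converges to $y$ in $\beta Y$ — this uses that any point of $\beta Y \setminus Y$ lying over $x$ is in the closure of a set whose $q$-image clusters at $x$ and which is cofinal in the hemicompact exhaustion; spelling this out carefully (possibly passing to an ultrafilter and using that $\overline q$ separates points of $X$) gives $y \in \overline{\{d_n\}}^{\beta Y} \subseteq \overline Z^{\beta Y}$.

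The main obstacle I anticipate is the last point: arranging that the \emph{specific} point $y$ (not merely \emph{some} point over $x$ in the corona) lies in the closure of the chosen zero set. The freedom we have is in the choice of $W$ and of the sequence $(d_n)$, so the right move is probably to run the construction relative to a neighbourhood filter of $y$: for each basic neighbourhood $\Omega$ of $y$ in $\beta Y$, the trace $\Omega \cap Y$ is a neighbourhood in $Y$ of nothing in particular, but $\overline q(\Omega)$ is a neighbourhood-like set around $x$, and since $q$ is not locally closed at $x$ one can find points of $M \cap \Omega$ projecting near $x$; indexing the $d_n$ by a cofinal sequence of such $\Omega$'s forces $d_n \to y$. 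One then needs the zero set $Z$ to absorb this whole net, which is why $Z$ should be taken as the zero set of a function vanishing on the closed discrete set $\{d_n\}$ rather than on some smaller set. I would present the neighbourhood-filter bookkeeping as the technical heart of the proof and keep the normality/zero-set construction as routine.
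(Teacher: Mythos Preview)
Your approach has a genuine gap, and it is exactly the one you flagged yourself: you cannot arrange for a \emph{countable} set $\{d_n\}$ in $Y$ to accumulate at a prescribed point $y\in\beta Y\setminus Y$. The space $\beta Y$ is essentially never first countable at points of the corona, so there is no ``cofinal sequence of neighbourhoods $\Omega$'' of $y$ to index the $d_n$ by. Replacing the sequence by a net does not help, because then $\{d_\alpha\}$ is no longer a closed discrete (or even $\sigma$-compact) subset of $Y$, and your zero-set construction via a Urysohn function collapses. Moreover, the points $d_n$ you build lie in the dense set $D$, and there is no link between $D$ and the specific closed set $N$ whose $\beta Y$-closure produced $y$; so even at the level of nets there is no reason any subset of $D$ should cluster at $y$ rather than at some other corona point over $x$.

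The paper's proof sidesteps all of this by working directly with the $z$-ultrafilter description of $\beta Y$, and uses neither purity nor the hypothesis $x\in N_q$ (those are only there to guarantee that such a $y$ exists). Choose a cozero set $U$ with $x\in U\subseteq V$ and let $W=X\setminus U$. Then $q^{-1}(W)$ is a zero set in $Y$, and it does \emph{not} belong to the $z$-ultrafilter $\mathcal F_y$ corresponding to $y$: if it did, continuity of $\overline q$ would force $x=\overline q(y)\in\overline{W}^{\beta X}\cap X=W$, contradicting $x\in U$. By maximality of $\mathcal F_y$ there is a zero set $Z\in\mathcal F_y$ disjoint from $q^{-1}(W)$; then $y\in\overline{Z}^{\beta Y}$ and $Z\subseteq q^{-1}(U)\subseteq q^{-1}(V)$. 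That is the whole argument. The moral is that when the target is a specific point of $\beta Y$, one should argue with $z$-filters rather than try to build an explicit approximating set inside $Y$.
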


\bigskip
\begin{proof} Let $U\subseteq V$ be cozero set  containing $x$ with complement $W$ in $X$. Then $q^{-1}(W)$ is a saturated zero subset in $Y$.
If $Z$ is a zero set in $Y$ contained in $q^{-1}(W)$ then $q(Z)$ is contained in $W$, so $y\notin\overline{Z}^{\beta Y}$. Hence, by the $z$-ultrafilter
property \cite[2.6 Theorem]{GJ}, there is a zero set $Z$ in $Y$ with $y\in\overline{Z}^{\beta Y}$ such that $Z\cap q^{-1}(W)$ is empty.
\end{proof}

\bigskip
\begin{thm} \label {Theorem 4.5.} Let $Y$ be a locally compact $\sigma$-compact Hausdorff space and $q: Y\to X$ a pure quotient map with $X$ Hausdorff.

(i) If $x\in X$ is a P-point then $q$ is locally closed at $x$.

(ii) Let $W$ be the set of P-points in $ F_q\cup N_q$ and $V$ the interior of $F_q\cup N_q$ in $X$.
Then $$V\cap W\subseteq F_q\subseteq W.$$

(iii) $F_q$ is a P-space and has empty interior in $X$.
\end{thm}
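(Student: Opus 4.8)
The plan is to prove the three parts in order, each building on the previous. For part (i), I would argue by contraposition: suppose $q$ is not locally closed at $x$, so $x\in N_q$. The key tool is the observation recorded just before Lemma 4.4 together with Lemma 4.4 itself. Pick $y\in\beta Y\setminus Y$ with $\overline q(y)=x$. I expect to use Lemma 4.4 to produce, for each neighbourhood of $x$, a zero set of $Y$ inside the preimage whose $\beta Y$-closure contains $y$; combining countably many shrinking neighbourhoods (using that $X$ is Tychonoff, so $x$ has a countable decreasing neighbourhood base only if $x$ is first countable — so instead I should produce an explicit countable family of neighbourhoods whose intersection fails to be a neighbourhood). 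Concretely: since $q$ is not locally closed at $x$, there is a closed $N\subseteq Y$ disjoint from $q^{-1}(x)$ with $x\in\overline{q(N)}$; the plan is to realise this "approach to $x$ from outside" as a countable family of cozero neighbourhoods $U_n$ of $x$ whose intersection still has $x$ as a limit point of its complement, contradicting the P-point property. The zero-set/$z$-ultrafilter machinery of Lemma 4.4 is what lets me pass between $\beta Y$ and the cozero sets of $X$ cleanly; I would run the argument so that the point $y$ witnesses that no neighbourhood basis at $x$ can be countably shrunk.

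For part (ii), the inclusion $V\cap W\subseteq F_q$ is immediate: a point of $W$ is in particular a P-point of the open set $V\subseteq X$, hence a P-point of $X$, so by part (i) it lies in $L_q\cup F_q$; intersecting with $F_q\cup N_q$ forces it into $F_q$. The substantive inclusion is $F_q\subseteq W$. Here I would take $x\in F_q$, so $q$ is locally closed at $x$ and $q^{-1}(x)$ has non-compact boundary, and show $x$ is a P-point of $F_q\cup N_q$. The strategy: given countably many neighbourhoods of $x$ in $F_q\cup N_q$, I must show their intersection is a neighbourhood. Since $q$ is pure with dense injective set $D$, and $q^{-1}(x)$ has non-compact boundary, I can choose (via density of $D$ and a compact decomposition $Y=\bigcup Y_i$) points $d_i\in D$ escaping to infinity with $q(d_i)$ converging to $x$; condition (ii) for pure maps makes $\bigcup_i q^{-1}(q(d_i))$ a closed saturated set, so $\{q(d_i)\}$ is closed discrete. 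The point is that \emph{any} sequence in $X$ converging to $x$ that meets infinitely many "bad" fibres produces such a closed discrete set — so in fact $x$ has no nontrivial convergent sequences approaching it within a small set, and more to the point, using Theorem 4.1 (since $x\in F_q$ is a $k$-point) I can argue that a failure of the P-point property at $x$ in $F_q\cup N_q$ would manifest as a countable family of neighbourhoods whose complements accumulate at $x$; pulling back, I would extract an escaping sequence in $D$ exactly as above, producing a closed discrete subset of a neighbourhood of $x$ and contradicting that $x$ is a $k$-point of $X$ (hence every compact set $K$ with $x\in\overline{E\cap K}$ fails to exist).

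Part (iii) is then almost formal. Since $F_q\subseteq W$ and $W$, being the set of P-points of $F_q\cup N_q$, is a P-space (a P-point in a space is a P-point in every subspace, so the set of P-points forms a P-space — this is stated in the paragraph before Lemma 4.4), $F_q$ is a subspace of the P-space $W$ and hence is itself a P-space. For empty interior: suppose $F_q$ has non-empty interior $O$ in $X$. Then $O\subseteq F_q\cup N_q$ is open in $X$, so $O\subseteq V$; and $O\subseteq F_q\subseteq W$, so every point of $O$ is a P-point of $X$ (P-point of the open set $O$). Thus $O$ is an open P-subspace of $X$; but I would rather derive a contradiction from compactness: take $x\in O$ and a compact neighbourhood — wait, points of $F_q$ have no compact neighbourhood, so that route is blocked. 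Instead, the cleanest finish is: $O$ is a nonempty open subset of $X$ lying in $F_q$; shrink to a cozero set $U\ni x$ with $\overline U$-type control is not available since there is no local compactness. So I would instead use hemicompactness directly: $X=\bigcup_i q(Y_i)$, the sets $q(Y_i)$ are compact, so $O\cap q(Y_i)$ covers $O$; since $O\subseteq F_q$ consists of non-locally-compact points and $O$ is open, but a point of $O$ lying in the interior (in $X$) of some compact $q(Y_i)$ would have a compact neighbourhood — contradiction. So no point of $O$ lies in the $X$-interior of any $q(Y_i)$; yet $O=\bigcup_i (O\cap q(Y_i))$ expresses the open set $O$ as a countable union of sets that are nowhere dense in $O$ (each $q(Y_i)$ is closed in $X$ with empty interior along $O$), contradicting the Baire category theorem since $O$, being open in the paracompact-normal — indeed completely regular $k_\omega$-space $X$, and more simply since $O$ as a Fréchet-type... — cleanest: $X$ is a $k_\omega$-space hence a $k$-space, but I will instead note $O$ is open in the locally-compact-at-$L_q$... — the intended argument is surely that $F_q$ with the subspace topology cannot contain a nonempty open (in $X$) set because such a set would be an uncountable-in-nature Baire space covered by the countably many nowhere-dense closed sets $q(Y_i)\cap O$. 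I expect the main obstacle to be making this last Baire-category step rigorous — ensuring $O$ really is a Baire space (it is, as an open subspace of the completely regular $k_\omega$-space $X$, which is Čech-complete-like enough; alternatively invoke that a $k_\omega$-space is a $k$-space and open subsets of $k_\omega$-spaces are $k_\omega$) and that each $O\cap q(Y_i)$ is genuinely nowhere dense in $O$ (which follows because a point in its $O$-interior would have a neighbourhood with compact closure, forcing it into $L_q$, contrary to $O\subseteq F_q$). The other delicate point, and really the crux of the whole theorem, is the extraction argument in part (ii): turning a failure of the P-point property at $x\in F_q$ into an escaping sequence in $D$ whose $q$-images form a closed discrete set inside a neighbourhood of $x$ — this is where the non-compact boundary of $q^{-1}(x)$, the density and injectivity of $D$, and the closedness condition (ii) for pure maps all have to be combined, in the same spirit as the proof of Theorem 3.3.
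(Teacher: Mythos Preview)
Your proposal has genuine gaps in all three parts, and the placement of the $\beta Y$ machinery is exactly backwards relative to what the paper does.

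\textbf{Part (i).} The paper's argument is a one-liner that does not use purity or Lemma~4.4: if $C\subseteq Y$ is closed and disjoint from $q^{-1}(x)$, then $q(C)=\bigcup_n q(C\cap K_n)$ is a countable union of compact sets not containing $x$, so the P-point $x$ is not in its closure. Your contrapositive plan via $\beta Y$ is vague (you never specify the countable family of neighbourhoods) and overcomplicated; the $z$-ultrafilter machinery is not needed here.

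\textbf{Part (ii).} This is the serious gap. Your aim is to show $x\in F_q$ is a P-point of $F_q\cup N_q$ by contradiction, ending with ``producing a closed discrete subset of a neighbourhood of $x$ and contradicting that $x$ is a $k$-point.'' But that is not a contradiction: points of $F_q$ \emph{are} $k$-points without compact neighbourhood, and the proof of Theorem~3.3 already shows that \emph{every} neighbourhood of such a point contains an infinite closed discrete set. The paper takes the opposite route: starting from a non-P-point $x_0\in F_q\cup N_q$, it directly builds a closed $N\subseteq Y$ disjoint from $q^{-1}(x_0)$ with $x_0\in\overline{q(N)}$, forcing $x_0\in N_q$. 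The key difficulty you never address is that the accumulating sets $E_i\subseteq F_q\cup N_q$ may contain points of $N_q$ whose fibres are compact, so you cannot simply choose $y\in q^{-1}(x')\setminus Y_{i+1}$. This is precisely where Lemma~4.4 and the $\beta Y$ extension are used: for such $x'\in N_q$ one takes $y\in\beta Y\setminus Y$ with $\overline q(y)=x'$ and a zero set $Z\subseteq Y\setminus Y_{i+1}$ with $y\in\overline Z^{\beta Y}$ and $Z$ disjoint from $q^{-1}(U_i)$. You put this machinery in part (i); it belongs here.

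\textbf{Part (iii).} Your Baire-category finish depends on the open set $O\subseteq X$ being a Baire space, which you never establish (and which is not automatic for open subsets of $k_\omega$-spaces). The paper avoids this entirely: it takes $U$ to be the interior of $F_q$ in $F_q\cup N_q$, observes that $q^{-1}(U)$ is open in $Y$ so $U$ is a $k$-space, and since $U$ is a P-space its compact subsets are finite, hence $U$ is discrete. Each point of $U$ is then an isolated point of $F_q\cup N_q$ and must lie in $\overline{L_q}$ (else it would be isolated in $X$ and belong to $L_q$), so $F_q$ has empty interior in $X$.
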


\bigskip
\begin{proof} (i) (This does not require $q$ to be pure).
Let $C$ be a closed subset of $Y$ disjoint from $q^{-1}(x)$. Then $$C=\bigcup_{n\ge 1} C\cap K_n,$$ where $Y=\bigcup_{n\ge 1} K_n$
is a compact decomposition for $Y$, so $q(C)=\bigcup_{n\ge 1} q(C\cap K_n)$. Since $q(C\cap K_n)$ is compact, hence closed in $X$,
the P-point $x$ does not lie in $\overline{q(C)}$. Thus $q$ is locally closed at $x$.

(ii) It follows from (i) that $F_q\supseteq V\cap W$. Now suppose that $x_0\in F_q\cup N_q$ is not a P-point of $F_q\cup N_q$.
Let $\{E_i: i\ge 1\}$ be a countable family of closed sets in $F_q\cup N_q\setminus\{ x_0\}$ with $x_0\in\overline {\bigcup_i
E_i}$. Fix $i$, and let $U_i$ be a closed neighbourhood of $x_0$ in $X$ that does not meet $E_i$.
For each $x\in E_i$ choose $y\in q^{-1}(x)$ with $y\notin Y_{i+1}$  if this is possible  (as it is for each $x\in F_q$
since $q^{-1}(x)$ is non-compact). Otherwise, by the observation preceding Lemma 4.4,
there exists $y\in \beta Y\setminus Y$ such that $\overline q(y)=x$. By Lemma 4.4,
there is a zero set $Z$ in $Y$ with $y\in \overline Z^{\beta Y}$ and $Z\cap q^{-1}(U_i)$ empty,
and we may also arrange that $Z\cap Y_{i+1}$ is empty.

Let $N_i'$ be the union of the collection of elements $y$ and zero sets $Z$ thus obtained, and let $N_i$ be the closure
of  $N_i'$ in $Y$. Then $N_i$ does not meet $Y_i$ since $Y_i$ is contained in the interior of $Y_{i+1}$, and $\overline{q(N_i)}\supseteq E_i$,
but $q(N_i)\cap U_i$ is empty (so in particular $x_0\notin q(N_i)$). Set
$N=\bigcup N_i$. Then $N$ does not meet $q^{-1}(x_0)$, and $N$ is closed in $Y$ because, for any $Y_j$, only finitely many of the sets $N_i$
meet $Y_j$, so $N\cap Y_j$ is closed.
But $\overline {q(N)}\supseteq\bigcup_i E_i$, so $x_0\in\overline {q(N)}\setminus q(N)$. Hence $x_0\in N_q$, so $F_q\subseteq W$.

(iii) $F_q$ is a P-space by (ii). Let $U$ be the interior of $F_q$ in $F_q\cup N_q$. Then $O:=q^{-1}(U)$ is open in $Y$, hence a locally
compact Hausdorff space, and $q|_O$ is a quotient map. Thus $U$ is a $k$-space, so a subset of $U$ is closed if and only if
its intersection with every compact subset of $U$ is closed. But $U$ is P-space, so compact subsets are finite \cite[4K]{GJ}.
Thus every subset of $U$ is closed, so $U$ is discrete. Hence each point in $U$ lies in the closure of $L_q$ (for
otherwise it would be an isolated point of $X$ and would belong to $L_q$ itself). Thus $F_q$ has empty interior in $X$.
\end{proof}

\medskip
\noindent Theorem 4.5 is an extension of Morita's theorem (1.1) because if $q$ is a closed quotient map then
$q_*$ is closed (Proposition 3.7), so $N_q$ and $N_{q_*}$ are both empty. Furthermore $F_q=F_{q_*}$ by the remark after Proposition 3.7,
so $F_{q}$ is closed, and thus $\sigma$-compact. Being a P-space by Theorem 4.5, $F_{q}$ must be a countable discrete set in the closure
of $L_q$, and thus we recover Morita's theorem.

In some cases we can go further. Recall that a topological space $X$ has {\sl countable tightness} at $x\in X$
if for each $A\subseteq X$ with $x\in \overline A$, there is a countable subset $B\subseteq A$ with $x\in\overline B$;
and $X$ is {\sl countably tight} if it is has countable tightness at every point. Clearly every metric space is countably tight.

\bigskip
\begin{cor} \label {Corollary 4.6.} Let $Y$ be a locally compact $\sigma$-compact Hausdorff space and $q: Y\to X$ a pure quotient map with $X$ Hausdorff.

(i) If $x\in F_q$ is a point of countable tightness in $X$ then $x$ is isolated in $F_q\cup N_q$ and lies in the closure of $L_q$.

(ii) If $X$ is countably tight then $N_q$ is closed in $X$, and $F_q$ is discrete and lies in the closure of $L_q$.
\end{cor}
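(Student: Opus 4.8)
The plan is to derive Corollary 4.6 from Theorem 4.5 together with the P-point/P-space facts recalled just before that theorem.

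First I would prove (i). Suppose $x\in F_q$ has countable tightness in $X$. By Theorem 4.5(ii), $x$ is a P-point of $F_q\cup N_q$. I claim $x$ is isolated in $F_q\cup N_q$. If not, then $x\in\overline{(F_q\cup N_q)\setminus\{x\}}$, so by countable tightness there is a countable set $B\subseteq (F_q\cup N_q)\setminus\{x\}$ with $x\in\overline B$. For each $b\in B$ choose a closed neighbourhood of $b$ in $X$ missing $x$; this exhibits $B$ as a countable union of sets $E_b$, each closed in $F_q\cup N_q\setminus\{x\}$, with $x\in\overline{\bigcup_b E_b}$ — precisely the configuration forbidden by the fact that $x$ is a P-point of $F_q\cup N_q$ (the P-point property says the intersection of countably many neighbourhoods of $x$ is a neighbourhood, equivalently $x$ is not in the closure of a countable union of closed sets avoiding it). Hence $x$ is isolated in $F_q\cup N_q$. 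Then, since $F_q$ has empty interior in $X$ by Theorem 4.5(iii), $x$ is not isolated in $X$, so every neighbourhood of $x$ meets $X\setminus(F_q\cup N_q)=L_q$; thus $x$ lies in the closure of $L_q$. (Alternatively this last point is exactly the argument in the proof of Theorem 4.5(iii): if $x$ were not in $\overline{L_q}$ it would be isolated in $X$, hence in $L_q$, a contradiction.)

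Next I would prove (ii). If $X$ is countably tight then every point of $F_q$ is a point of countable tightness, so by (i) every point of $F_q$ is isolated in $F_q\cup N_q$; that is, $F_q$ is an open subset of the closed set $F_q\cup N_q$, whence $N_q=(F_q\cup N_q)\setminus F_q$ is closed in $F_q\cup N_q$ and therefore closed in $X$. Also $F_q$, being open in $F_q\cup N_q$ and consisting of points each isolated in $F_q\cup N_q$, is discrete; and $F_q$ lies in $\overline{L_q}$ by the last sentence of (i) applied pointwise. That completes the argument.

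The only place needing care is the equivalence, used in (i), between the P-point property of $x$ in $Y'=F_q\cup N_q$ and the statement ``$x\notin\overline{\bigcup_i E_i}$ for every countable family $\{E_i\}$ of closed subsets of $Y'\setminus\{x\}$.'' This is routine: if $x$ is a P-point, then $\bigcap_i (Y'\setminus E_i)$ is a neighbourhood of $x$ contained in $Y'\setminus\bigcup_i E_i$; conversely, if the stated condition holds and $\{V_i\}$ are neighbourhoods of $x$, take closed neighbourhoods $U_i\subseteq V_i$ and apply the condition to $E_i=Y'\setminus U_i$. I do not anticipate any real obstacle; the substance is all in Theorem 4.5, and Corollary 4.6 is essentially an unwinding of ``P-point plus countable tightness $\Rightarrow$ isolated'' combined with ``$F_q$ has empty interior in $X$.''
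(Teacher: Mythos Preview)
Your proposal is correct and follows essentially the same approach as the paper, which simply states that (i) ``follows immediately from the fact that $x$ is a P-point in $F_q\cup N_q$'' and that (ii) follows from (i); you have merely unwound this, making explicit the standard fact that a P-point of countable tightness is isolated, and invoking Theorem 4.5(iii) (or the equivalent observation that an isolated point of $X$ would lie in $L_q$) for the closure-of-$L_q$ clause. One cosmetic slip: in your final paragraph's converse direction you set $E_i=Y'\setminus U_i$ with $U_i$ a closed neighbourhood, but that set is open, not closed---take instead $E_i=Y'\setminus\operatorname{int}(V_i)$; this direction is in any case not needed for the corollary.
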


\medskip

\begin{proof} (i) follows immediately from the fact that $x$ is a P-point in $F_q\cup N_q$, and (ii) follows from (i).
\end{proof}

\medskip

\noindent Corollary 4.6 seems to be new even for quotients of locally compact subsets of ${\R}^n$.

We have just seen
that if $X$ is countably tight then there are no points of $F_q$ in the interior of $N_q\cup F_q$, but if $X$ is not countably tight,
such points may occur. For example, let $X$ be the quotient of $\omega_1+1\times S_{\omega}$ (where $S_{\omega}$ is the Arkhangel'skii-Franklin space \cite{AF}
obtained by identifying $\{\omega_1\}\times S_{\omega}$ to a point $x_0$. Then $L_q$ is empty and $F_q=\{x_0\}$.

Not every P-point of $F_q\cup N_q$ need belong to $F_q$. For example, let $Y={\R}$ and for $n\ge 1$,
identify the pairs of points $n$ and $1/n$. Let $X$ be the resulting quotient space and $q:Y\to X$ the quotient map, which is obviously pure.
Then $F_q\cup N_q$ consists of the singleton $\{ q(0)\}$, but $F_q$ is empty.
The following example, however, has the satisfying feature that $F_q$ consists precisely of the P-points in $F_q\cup N_q$.

\bigskip
\noindent {\bf Example 4.7.} Let $Z$ be a locally compact $\sigma$-compact Hausdorff space and $V$ a closed subset
with empty interior (so that $V\subseteq \overline {Z\setminus V}$). We shall find
a locally compact $\sigma$-compact Hausdorff space $Y$ and a pure quotient map $q: Y\to X$ with $X$ Hausdorff
such that $V$ is homeomorphic to $F_q\cup N_q$, with $F_q$ corresponding  to the P-points in $V$.
\bigskip

\noindent Let $Y=\bigcup_i Z_i$ be the disjoint union of countably many copies
of $Z$. Set $W=\bigcup_i V_i$, and for each $i$ let $\phi_i$ be a homeomorphism from $V$ to $V_i$. Let $O$ be the complement of $W$
in $Y$. Let $\sim$ be the equivalence relation defined on points $y\in Y$ by $y\sim y'$ if and only if either (i) $y=y'$; or (ii) $y, y'\in W$ and there exists $v\in V$
such that $\phi_i(v)=y$ and $\phi_j(v)=y'$ for some $i$ and $j$. Set $X=Y/\sim$ and let $q$ be the quotient map. Then $q$ is pure
because $V$ has empty interior in $Z$.
Note that $q(W)$ is homeomorphic to $V$, and that $q|_{V_i}$ is a homeomorphism from $V_i$ onto $q(W)$ for each $i$. For ease of notation we
will identify $q(W)$ with $V$, so that $\phi_i: V\to V_i$ is the inverse of $q|_{V_i}$.

Clearly $q|_O$ is a homeomorphism from $O$ onto $q(O)$, so if $x\in q(O)$ then $x$ is a point of local compactness.
Now let $x\in V$ and suppose first that $x$ is not a P-point in $V$. Let $\{N_i\}$ be a countable family of closed
subsets of $V$ such that $x\notin N_i$ for all $i$ but $x\in\overline {\bigcup_i N_i}$. Set $N=\bigcup_i \phi_i(N_i)$. Then $N$
is a closed subset of $Y$ disjoint from $q^{-1}(x)$, but $x\in \overline {q(N)}$. Hence $x\in N_q$.

Now suppose that that $x$ is a P-point in $V$. Let $N$ be a closed subset of $Y$ not
meeting $q^{-1}(x)$. Then the sets $q(N\cap V_i)$ are a countable family of closed set in $V$ (since $q|_{V_i}$ is a homeomorphism), and they
each miss $x$, so $M:=\overline {\bigcup_i q(N\cap V_i)}$ also misses $x$. Set $N'=N\cup \bigcup_i \phi_i(M)$.
Then $N'$ is a closed saturated set in $Y$ containing $N$ and missing
$q^{-1}(x)$, so $x\notin \overline{q(N)}$. Hence $x\in F_q$.

\bigskip

\noindent Thus in Example 4.7, $N_q$ is closed in $X$ if and only if every P-point in $V$ is an isolated point of $V$.
If $V$ has a non-isolated P-point, for example if $V=\omega_1+1$ (and $Z$ is the cone over $V$), then $N_q$ is not closed. If $Z=
\beta {\NN}$ and $V=\beta{\NN}\setminus {\NN}$, then it depends on the set theoretic axioms adopted as to
whether $F_q$ is empty or dense in $F_q\cup N_q$.

\bigskip

\section {Arkhangelskii's question and Fr\'echet-Urysohn spaces}

\bigskip

\noindent Finally, we turn to consider first countable and Fr\'echet-Urysohn spaces and Arkhangel'skii's question
(for a vast amount of information on this whole area, see \cite{Siwiec}).

As mentioned in the Introduction, a point $x$ in a topological space $X$ is a {\sl Fr\'echet-Urysohn} point if whenever $E\subseteq X$
with $x\in\overline E$ there is a sequence $(v_n)$ in $E$ with $v_n\to x$. Clearly every point of first countability
is a Fr\'echet-Urysohn point. If every point is a Fr\'echet-Urysohn point
then $X$ is a {\sl Fr\'echet-Urysohn} space. A Fr\'echet-Urysohn point in a $k_{\omega}$-space is easily seen to be a
a $k$-point (Proposition 5.1(ii)) and thus belongs either to $L_{q_*}$ or to $F_{q_*}$.
Ordman showed that a point of first countability must belong to $L_{q_*}$ (see \cite[p. 113]{Fr}) but a general
Fr\'echet-Urysohn point may belong to $F_{q_*}$. Being a point of countable tightness, however, it must
belong to the closure of $L_{q_*}$, and the set of such points must be countable (Corollary 4.6(i)).

The importance of Fr\'echet-Urysohn spaces is that every quotient of a compact first countable (or Fr\'echet-Urysohn)
space is Fr\'echet-Urysohn (see Corollary 5.2 below).
For example, let $Y=[0,1]\times [0,1]$ with the order topology of the lexicographic order and let $V=[0,1]\times \{0,1\}$.
Let $q:Y\to X$ be the quotient map that  takes $V$ to a point $x_0$. Then $Y$ is compact, Hausdorff, and first countable, and $V$ is closed but not a zero-set, so
$X$ is compact, Hausdorff and Fr\'echet-Urysohn but not first countable at $x_0$.

Recall that a topological space $X$ is {\sl sequential} if every sequentially closed subset of $X$ is closed. Every Fr\'echet-Urysohn space is
sequential, but there are compact Hausdorff sequential spaces which are not Fr\'echet-Urysohn.
More generally, every sequential space has countable tightness; and the famous Moore-Mrowka problem -- shown by Ostaszewski,
Fedorcuk, Balogh, Dow, Eisworth, {\sl et al.} to depend on set-theoretic axioms --
was whether there exists a compact Hausdorff space of countable tightness which is not sequential.

If $Y$ is a second countable locally compact Hausdorff then $\Cl(Y)$ is also second countable \cite[Remark 3, p. 474]{Fel}, but the same does not hold for
first countability. Indeed, if $Y$ is a locally compact Hausdorff space, then the following three conditions are equivalent: (a) $\Cl(Y)\setminus\{\emptyset\}$
is first countable; (b) $Y$ is both hereditarily
Lindelof and hereditary separable (for a locally compact space, first countability is a consequence of being hereditarily Lindelof) (c) $\Cl(Y)\setminus \{\emptyset\}$
is countably tight \cite{Hou}, \cite[Corollary 2.16]{CHV}.
The unit square with the lexicographic order (above) is neither separable nor hereditarily Lindelof.
The passage from $Y$ to $Y_*$ in Theorem 3.6 thus preserves second countability but it is not clear that it will preserve first countability or
the Fr\'echet-Urysohn property (although we
do not have specific counter-examples). This problem, however, is easily circumvented.

\bigskip

\begin{prop}  \label {Proposition 5.1.} Let $Y$ be a locally compact, $\sigma$-compact Hausdorff space and $q:Y\to X$ a quotient map
with $X$ Hausdorff.

(i) If $Y$ is a Fr\'echet-Urysohn space and $q_*$ is locally closed at $x$ then $x$ is a Fr\'echet-Urysohn point.

(ii) Suppose that $q$ is pure. If $x\in X$ is a Fr\'echet-Urysohn point then $q$ is locally closed at $x$.
\end{prop}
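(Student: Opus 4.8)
For part (i), the plan is to argue directly from the definitions. Suppose $Y$ is Fr\'echet-Urysohn and $q_*$ is locally closed at $x$. Given $E\subseteq X$ with $x\in\overline E$, I would set $F=q_*^{-1}(E)$ inside $Y_*$. Since $q_*$ is locally closed at $x$, the argument of Theorem 2.3 (the pseudo-open direction) shows that $\overline F$ meets $q_*^{-1}(x)$; pick such a point $G\in Y_*$. Now $G$ is a closed subset of $Y$, so using Lemma 3.5 and the structure of $Y_*$, one can locate an actual point $y\in Y$ lying in the closure (in $\Cl(Y)$) of $F$ in a way that $q(y)=x$... but the cleaner route is to transfer the problem to $Y$ itself: observe that $Y_*$ need not be Fr\'echet-Urysohn, which is exactly the difficulty flagged in the paragraph preceding the proposition, so one should \emph{not} try to extract a sequence in $Y_*$. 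Instead, I would take a net $(G_\alpha)$ in $F$ converging to $G$, choose $y_\alpha\in G_\alpha$, and note $q(y_\alpha)\in E$; since $G_\alpha\to G$ in the Fell topology and $y\in G\subseteq Y$, after passing to a subnet we get $y_\alpha\to y$ in $Y$ with $q(y)=x$. This puts a \emph{net} in $q^{-1}(E)\subseteq Y$ converging to a point of $q^{-1}(x)$, but we need a \emph{sequence} converging to $x$ in $X$. This is where the Fr\'echet-Urysohn hypothesis on $Y$ enters: the set $q^{-1}(E)$ accumulates at $y$, so by Fr\'echet-Urysohn-ness of $Y$ there is a sequence $(y_n)$ in $q^{-1}(E)$ with $y_n\to y$; then $q(y_n)\in E$ and $q(y_n)\to q(y)=x$ by continuity, giving the required sequence. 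The only subtlety is ensuring $q^{-1}(E)$ really does accumulate at a point of the fibre --- this is precisely what the pseudo-openness (equivalently local closedness via Theorem 4.1) of $q_*$ at $x$ delivers, translated back down to $Y$ through Lemma 3.5.

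For part (ii), assume $q$ is pure and $x$ is a Fr\'echet-Urysohn point. A Fr\'echet-Urysohn point is in particular a $k$-point (if $E\subseteq X$ and $x\in\overline E$, take a sequence $v_n\to x$ in $E$; then $K=\{x\}\cup\{v_n:n\ge 1\}$ is compact and $x\in\overline{E\cap K}$). Since $q$ is pure, Theorem 4.1 says $q$ is locally closed at $x$ exactly when $x$ is a $k$-point, so we are done immediately. I expect this direction to be essentially a one-line deduction from Theorem 4.1, modulo recording the (easy) fact that Fr\'echet-Urysohn points are $k$-points.

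The main obstacle is part (i): the temptation is to run the Fr\'echet-Urysohn argument inside $Y_*$, but $Y_*$ is exactly the space whose first-countability/Fr\'echet-Urysohn status is unclear (as discussed just before the proposition), so the argument must be organised to use only the Fr\'echet-Urysohn property of the \emph{original} $Y$. The device for doing this is to convert Fell-convergence of fibres in $Y_*$ back into ordinary convergence in $Y$ via Lemma 3.5 and the definition of the Fell topology (a net converging to $G\ne\emptyset$ must have members meeting every neighbourhood of every point of $G$), and only then invoke Fr\'echet-Urysohn-ness of $Y$ to pass from a net to a sequence. A secondary point to handle carefully is that local closedness of $q_*$ at $x$ must be used to guarantee that $q_*^{-1}(E)$ (not just its saturation) actually clusters at a point of $q_*^{-1}(x)$; this is the pseudo-open half of Theorem 2.3 applied to $q_*$, which is legitimate since $Y_*$ is again locally compact $\sigma$-compact Hausdorff by Theorem 3.6.
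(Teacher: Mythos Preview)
Your proposal is correct and follows essentially the same route as the paper's proof. For (i), the paper likewise pulls back to $E:=q_*^{-1}(V)$, uses local closedness of $q_*$ to get a net $(C_\alpha)$ in $E$ converging to some $C\in q_*^{-1}(x)$, picks $y\in C$ and a net $y_\alpha\in C_\alpha$ with $y_\alpha\to y$ via Fell convergence, and then invokes the Fr\'echet--Urysohn property of $Y$ (not $Y_*$) to extract a sequence in $q^{-1}(V)$ converging to $y$, whose image is the desired sequence in $V$; for (ii), the paper gives exactly your one-line argument that a Fr\'echet--Urysohn point is a $k$-point and then applies Theorem~4.1.
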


\medskip

\begin{proof} (i) Let $V$ be a subset of $X$ and suppose that $x\in\overline V$. With $E:=q_*^{-1}(V)$, we have $\overline E \cap q_*^{-1}(x)\ne\emptyset$
because $q_*$ is locally closed at $x$. Let $(C_{\alpha})$ be a net of points in $E$ converging to some
$C\in q_*^{-1}(x)$. Let $y\in C$. Then there exists a net $(y_{\alpha})$, with $y_{\alpha}\in C_{\alpha}$ for each $\alpha$, such that
$\lim_{\alpha} y_{\alpha}=y$. Then the set $\{y_{\alpha}\}\subseteq q^{-1}(V)$, and hence $ \overline{q^{-1}(V)}\cap
q^{-1}(x)\supseteq \{y\}$. Since $Y$ is a Fr\'echet-Urysohn space, there is a sequence $(y_n)$ in $\overline{q^{-1}(V)}$
converging to $y$. Thus $(q(y_n))$ gives the required sequence in $V$ converging to $x$.

(ii) Let $x\in X$ be a Fr\'echet-Urysohn point and let $E\subseteq X$ with $x\in\overline E$. Then by
assumption there is a sequence $(x_n)$ in $E$ such that $x_n\to x$.
Let $K$ be the compact set $\{x\}\cup\{x_i: i\ge 1\}$. Then $x\in \overline {E\cap K}$, so $x$ is a $k$-point, and hence
$q$ is locally closed at $x$ by Theorem 4.1.
\end{proof}

\bigskip
\begin{cor} \label{Corollary 5.2.}  Let $Y$ be a Fr\'echet-Urysohn locally compact, $\sigma$-compact, Hausdorff space and
$q:Y\to X$ a quotient map with $X$ Hausdorff.

(i) The open set $H_{q_*}=L_{q_*}\cup F_{q_*}$ is the set of Fr\'echet-Urysohn points of $X$.

(ii) $X$ is a Fr\'echet-Urysohn space if and only if $q_*$ is closed.

(iii) If $X$ is locally compact then $X$ is a Fr\'echet-Urysohn space.
\end{cor}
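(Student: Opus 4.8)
The plan is to deduce all three parts of Corollary 5.2 from the machinery already in place, chiefly Proposition 5.1, Theorem 4.1, and Corollary 4.3, using the fact that $q_*$ is always pure (Theorem 3.6). First I would record the easy observation that $q_*$ inherits the Fr\'echet--Urysohn property of $Y$ only in a limited sense, so the argument must route through $Y$ itself rather than through $Y_*$; this is exactly why Proposition 5.1 is phrased with hypotheses on $Y$ and on $q_*$ separately.

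For part (i): one inclusion is Proposition 5.1(i) applied with the pure quotient map $q_*$ in place of $q$ --- if $q_*$ is locally closed at $x$, then since $Y$ is Fr\'echet--Urysohn, $x$ is a Fr\'echet--Urysohn point. The reverse inclusion is Proposition 5.1(ii): $q_*$ is pure, so if $x$ is a Fr\'echet--Urysohn point then $q_*$ is locally closed at $x$. By Corollary 4.2, the set of points where the pure map $q_*$ is locally closed is exactly $L_{q_*}\cup F_{q_*}$, which equals $H_{q_*}$ by Corollary 4.3; and this set is open because $L_{q_*}$ is open and (by Theorem 4.5(iii), or directly) $F_{q_*}$ contributes no interior points --- actually more simply, the set of $k$-points is $H_{q_*}$ and we have already noted $F_q\cup N_q$ is closed so $H_{q_*}=L_{q_*}\cup F_{q_*}$ is open. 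For part (ii): $X$ is Fr\'echet--Urysohn iff every point of $X$ is a Fr\'echet--Urysohn point, which by (i) holds iff $H_{q_*}=X$, i.e.\ iff $q_*$ is locally closed at every point; since $q_*$ is surjective this says precisely that $q_*$ is closed. For part (iii): if $X$ is locally compact then every point has a compact neighbourhood, so by Corollary 4.2(i) (applied to the pure map $q_*$) every point lies in $L_{q_*}$, hence in $H_{q_*}$; by (i) every point is then a Fr\'echet--Urysohn point, so $X$ is a Fr\'echet--Urysohn space.

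I do not expect a serious obstacle here, since the corollary is essentially a bookkeeping consequence of earlier results; the one point requiring a little care is making sure that Proposition 5.1, stated for a quotient map $q$, is being applied correctly with $q_*$ playing the role of both ``$q_*$'' (in part (i), where the hypothesis is local closedness of $q_*$) and ``$q$'' (in part (ii), where purity of $q$ is needed) --- these are consistent because $q_*$ is itself a pure quotient map from a locally compact $\sigma$-compact Hausdorff space, and it induces the same topology on $X$, so ``$(q_*)_* $'' is not needed: one simply applies Proposition 5.1 directly to $q_*$. The other mild subtlety is confirming that $Y_*$ being Fr\'echet--Urysohn is \emph{not} required --- Proposition 5.1(i) only uses that $Y$ is Fr\'echet--Urysohn, and the net-to-sequence passage happens downstairs in $Y$, not in $\Cl(Y)$ --- which is the whole point of the ``easily circumvented'' remark preceding Proposition 5.1.
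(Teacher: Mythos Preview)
Your deductions for parts (ii) and (iii), and the two inclusions in part (i), are correct and agree with the paper. The gap is in your justification that $H_{q_*}=L_{q_*}\cup F_{q_*}$ is \emph{open}. You write that ``$F_q\cup N_q$ is closed so $H_{q_*}=L_{q_*}\cup F_{q_*}$ is open'', but the complement of $F_{q_*}\cup N_{q_*}$ is $L_{q_*}$, not $L_{q_*}\cup F_{q_*}$; what you need is that $N_{q_*}$ is closed, and that is \emph{not} automatic for pure quotients. Indeed, Example~4.7 with $V=\omega_1+1$ gives a pure quotient map with $N_q$ not closed (there $F_q=\{\omega_1\}$ is a non-isolated point of $F_q\cup N_q$), so openness of $H_{q_*}$ genuinely uses the hypothesis that $Y$ is Fr\'echet--Urysohn. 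Your alternative remark that $F_{q_*}$ has empty interior (Theorem~4.5(iii)) is true but does not help either.

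The paper supplies the missing step via Corollary~4.6: once you know from Proposition~5.1 that every point of $F_{q_*}$ is a Fr\'echet--Urysohn point of $X$, it is in particular a point of countable tightness, so Corollary~4.6(i) says it is isolated in $F_{q_*}\cup N_{q_*}$. Hence $F_{q_*}$ is open in the closed set $F_{q_*}\cup N_{q_*}$, so $N_{q_*}$ is closed in $X$, and therefore $H_{q_*}=X\setminus N_{q_*}$ is open. Inserting this one sentence fixes your argument and brings it in line with the paper's proof.
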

\medskip

\begin{proof} (i) follows from Proposition 5.1 and Corollary 4.6;
(ii) from Proposition 5.1; and (iii) from Proposition 5.1 and Theorem 3.3.
\end{proof}

\bigskip

\begin{cor} \label {Corollary 5.3.} Let $Y$ be a locally compact, $\sigma$-compact, Hausdorff space and
$q:Y\to X$ a quotient map with $X$ Hausdorff and a Fr\'echet-Urysohn space. Then

(i) $q_*$ is closed and $L_{q_*}$ is a dense open subset of $X$ with discrete complement;

(ii) if $Y$ is hereditarily Lindelof then $X$ is first countable  at each point of $L_{q_*}$.
\end{cor}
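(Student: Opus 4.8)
The plan is to deduce both parts from results already established in Sections 4 and 5, applied to the pure quotient map $q_*: Y_* \to X$ furnished by Theorem 3.6. First recall that $X$ is Fréchet-Urysohn, hence countably tight, and that $q_*$ is a pure quotient map inducing the same topology on $X$ as $q$. For part (i), I would argue that $N_{q_*}$ is empty: if $x \in N_{q_*}$ then by Corollary 4.2(iii) $x$ is not a $k$-point, but a Fréchet-Urysohn point is always a $k$-point (as noted in Section 5, and used in the proof of Proposition 5.1(ii) — given $E$ with $x \in \overline E$, take a sequence $x_n \to x$ in $E$ and let $K = \{x\} \cup \{x_n\}$, a compact set with $x \in \overline{E \cap K}$). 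So $N_{q_*} = \emptyset$, which already forces $q_*$ to be locally closed at every point, i.e.\ $q_*$ is closed. Then $X = L_{q_*} \cup F_{q_*}$, and by Corollary 4.6(ii) (valid since $X$ is countably tight) $F_{q_*}$ is discrete and contained in the closure of $L_{q_*}$; since $L_{q_*}$ is open (Corollary 4.2) and its complement $F_{q_*}$ is discrete and lies in $\overline{L_{q_*}}$, we get that $L_{q_*}$ is a dense open subset of $X$ with discrete complement.

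For part (ii), assume in addition that $Y$ is hereditarily Lindelöf. The key input is the topological fact quoted just before Proposition 5.1: for a locally compact Hausdorff space $Y$, if $Y$ is hereditarily Lindelöf and hereditarily separable then $\Cl(Y)\setminus\{\emptyset\}$ is first countable, and for a locally compact space hereditary Lindelöfness alone already yields first countability of $\Cl(Y)\setminus\{\emptyset\}$ (this is the parenthetical remark attributed to \cite{Hou}, \cite[Corollary 2.16]{CHV}). Thus $\Cl(Y)\setminus\{\emptyset\}$ is first countable, and $Y_*$, being a subspace of it, is first countable as well. Now if $x \in L_{q_*}$ then by Theorem 3.3 (applied to the pure map $q_*$) the fibre $q_*^{-1}(x)$ is compact, and $x$ has a compact neighbourhood in $X$; restricting $q_*$ to the preimage of a compact neighbourhood of $x$ gives a closed (being continuous from compact Hausdorff) surjection onto a neighbourhood of $x$, from a compact first countable space. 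First countability then passes to the image at $x$: a compact-to-compact closed map with compact fibres from a first countable space has first countable image (alternatively, invoke Ordman's theorem as cited on \cite[p.~113]{Fr}, that points of $L_{q_*}$ with first countable $Y_*$ upstairs are points of first countability of $X$). Hence $X$ is first countable at each point of $L_{q_*}$.

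The main obstacle is part (ii): one must be careful that the relevant hyperspace first-countability statement is exactly the one the paper has quoted, and that it applies to $Y$ (not merely to $Y_*$), so that $Y_* \subseteq \Cl(Y)\setminus\{\emptyset\}$ inherits first countability. After that, transferring first countability across $q_*$ at a point of $L_{q_*}$ is the delicate bookkeeping step: one should localise to a compact neighbourhood so as to work with a genuinely closed quotient map between compact Hausdorff spaces and then apply the standard fact (Ordman's result) that a point of $L_{q_*}$ is a point of first countability of $X$ whenever the domain is first countable there. The remaining assertions in part (i) — denseness of $L_{q_*}$ and discreteness of its complement — are then immediate from Corollaries 4.2 and 4.6 and require no further work.
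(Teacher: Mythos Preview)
Your argument for part (i) is correct and essentially matches the paper's: both routes show $q_*$ is closed by observing that every Fr\'echet--Urysohn point is a $k$-point (Proposition 5.1(ii)/Theorem 4.1), and then read off the structure of $L_{q_*}$ and its complement (you via Corollary 4.6(ii), the paper via Morita's Theorem 2.1 applied to the closed map $q_*$).

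Part (ii), however, rests on a misreading. The parenthetical remark in the paper's discussion of $\Cl(Y)$ says that \emph{for a locally compact space $Y$, first countability of $Y$ follows from hereditary Lindel\"ofness of $Y$}; it does \emph{not} say that hereditary Lindel\"ofness alone makes $\Cl(Y)\setminus\{\emptyset\}$ first countable. The quoted equivalence (a)$\Leftrightarrow$(b) genuinely requires $Y$ to be both hereditarily Lindel\"of \emph{and} hereditarily separable, and the corollary's hypothesis gives you only the former. So your claim that $Y_*$ is first countable is unjustified, and the rest of your argument for (ii) collapses.

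The paper's proof of (ii) avoids $Y_*$ and the hyperspace altogether. It works directly in $X$: the set $L_{q_*}$ is open and locally compact, and every open subset $U\subseteq L_{q_*}$ is $\sigma$-compact because $q^{-1}(U)$ is open in the hereditarily Lindel\"of space $Y$ (hence Lindel\"of, hence $\sigma$-compact in the locally compact $Y$), and $U=q(q^{-1}(U))$ is its continuous image. Thus $L_{q_*}$ is a locally compact space in which every open set is $\sigma$-compact, i.e.\ $L_{q_*}$ is hereditarily Lindel\"of, and a locally compact hereditarily Lindel\"of Hausdorff space is first countable. Note that this uses the \emph{original} map $q$ (with hereditarily Lindel\"of domain $Y$), not $q_*$; you should reorganise your argument along these lines.
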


\medskip

\begin{proof} (i) The first statement follows from Theorems 2.1 and 3.6 and Proposition 5.1(ii).

(ii) For second
statement, note (as already mentioned) that a locally compact hereditarily Lindelof space is first countable. Furthermore, the property
of being hereditarily Lindelof is equivalent, in the presence of local compactness, to every open subset being
$\sigma$-compact. The set $L_{q_*}$ is locally compact and open, and
each open subset of $L_{q_*}$ inherits the property of $\sigma$-compactness from its inverse image under $q$. Hence
$L_{q_*}$ is first countable.
\end{proof}

\bigskip

\noindent {\bf Arkhangel'skii's question.} In \cite[Problems 5.14 and 5.15]{Arh}, Arkhangel'skii asked for a Hausdorff quotient of
a locally compact second countable Hausdorff space which is (countable and) Fr\'echet-Urysohn but nowhere first countable.
A locally compact second countable Hausdorff space is perfectly normal (i.e. every open
set is an $F_{\sigma}$), and the same is true in every Hausdorff quotient, so every point in a quotient is a $G_{\delta}$, and it is well known that a $G_{\delta}$ point
with a compact neighbourhood is first countable. Thus the required example had to have $L_{q_*}$ empty
and $F_{q_*}\cup N_{q_*}$ as a Fr\'echet-Urysohn space, along the lines of the space $S_{\omega}$ mentioned
in the Introduction.

In the absence of a structure theory for $k_{\omega}$-spaces, Arkhangel'skii's question is very pertinent, but in the course of this
paper we have seen multiple reasons why such an example cannot exist: for instance, every Fr\'echet-Urysohn
point in a $k_{\omega}$-space lies in the closure of $L_{q_*}$ (Corollary 4.6(ii)).
As we have mentioned, the question had, in fact, already been answered negatively by Siwiec in 1976 \cite{Siw} who
showed (among other things) that if a $k_{\omega}$-space is Fr\'echet-Urysohn and has a $k_{\omega}$-decomposition of compact
metric spaces then it is the closed image of a locally compact separable metric space (and hence Morita's theorem implies
that $L_{q_*}$ is dense).

Since a locally compact second countable space is hereditarily Lindelof, Corollary 5.3(i) and (ii) gives a more general negative answer
to Arkhangel'skii's question. If the Continuum Hypothesis is assumed, a still more general answer can be given.

\bigskip
\begin{thm} \label {Theorem 5.5.} (CH) Let $X$ be a sequential $k_{\omega}$-space.
Then $X$ is first countable at a dense subset of points if and only if the set of non-$k$-points of $X$ has empty interior.
\end{thm}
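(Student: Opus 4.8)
The plan is to combine the earlier structure results (especially Theorem 4.5 and Corollary 4.6) with the set-theoretic input supplied by CH, via the Balogh/Dow theorem that under CH every compact Hausdorff space of countable tightness is sequential — or, in the form we actually need, that a sequential $k_\omega$-space has countable tightness, together with the hereditary behaviour of these properties. First I would fix a pure quotient map $q_*:Y_*\to X$ as constructed in Theorem 3.6, so that $X=L_{q_*}\cup F_{q_*}\cup N_{q_*}$ with $L_{q_*}$ open, $F_{q_*}\cup N_{q_*}$ closed, and (by Theorem 4.5) $F_{q_*}$ contained in the set of P-points of $F_{q_*}\cup N_{q_*}$ with empty interior in $X$. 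Throughout, write $N=N_{q_*}$, $F=F_{q_*}$, $L=L_{q_*}$.

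For the easy direction, suppose $X$ is first countable at a dense set of points. A first-countable point is a Fréchet-Urysohn point, hence a $k$-point, hence (by Corollary 4.2 and Theorem 4.1) lies in $L\cup F$; in fact Ordman's observation, quoted in Section 5, puts it in $L$. So $L$ is dense, and since $N\subseteq F\cup N = X\setminus L$ is contained in the complement of a dense set, $N$ has empty interior. (One should note this direction does not use CH or sequentiality.) The substance is the converse: assume $N$ has empty interior and deduce first countability on a dense set. Since $X$ is sequential it is countably tight, and every subspace of a countably tight space... no — countable tightness is not hereditary in general, but it is inherited by closed subspaces, so $F\cup N$ is countably tight. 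Then Corollary 4.6(i) applies: every point of $F$ is a point of countable tightness in $X$, hence is isolated in $F\cup N$ and lies in $\overline{L}$. Thus $F$ is discrete, and $N=(F\cup N)\setminus F$ is the complement of an open subset of the closed set $F\cup N$, so $N$ is closed in $X$; since $N$ has empty interior, $L\cup F$ is dense. Now the key step: I want first countability at the points of $L$, or at least at a dense subset of $L\cup F$. Here CH enters — a sequential $k_\omega$-space is the continuous image of a locally compact $\sigma$-compact Hausdorff space, and under CH one can arrange (or quote Siwiec-type results) that the domain may be taken hereditarily Lindelöf, whence by Corollary 5.3(ii)-style reasoning $L_{q_*}$ is first countable. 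More carefully: $L$ is a locally compact open $k_\omega$-subspace, $q_*^{-1}(L)$ is locally compact $\sigma$-compact; under CH a sequential (hence countably tight) $k_\omega$-space of this kind can be represented over a hereditarily Lindelöf domain — this is exactly where the Continuum Hypothesis is consumed — and then local compactness plus hereditary Lindelöfness forces first countability on $L$. Since $L$ is dense in $X$ (as $F$ has empty interior and $L\cup F$ is dense, while isolated points of $F\cup N$ in $\overline L$ contribute nothing to the interior), $X$ is first countable at a dense set of points.

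The main obstacle, and the place I expect the real work to be, is the middle step where CH is invoked to pass to a hereditarily Lindelöf (equivalently, second countable, for the relevant compact pieces) domain. The naive $Y_*$ from Theorem 3.6 need not be hereditarily Lindelöf even when $X$ is nice, as the discussion before Proposition 5.1 warns (the equivalence (a)$\Leftrightarrow$(b)$\Leftrightarrow$(c) there shows $\Cl(Y)\setminus\{\emptyset\}$ is first countable iff $Y$ is hereditarily Lindelöf and hereditarily separable). So one cannot simply quote $q_*$; instead one must use that a sequential space has countable tightness and that, under CH, a countably tight compact Hausdorff space is sequential and — the deeper ingredient — such spaces admitting a $k_\omega$-decomposition can be decomposed into metrizable (hence second countable) compacta, along the lines of Siwiec's 1976 theorem cited in the paper. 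With a second countable $k_\omega$-decomposition in hand, the domain may be chosen second countable, $\Cl$ of it is second countable, $L_{q_*}$ is first countable, and the argument closes. Everything else — the extraction of discreteness of $F$, closedness of $N$, density of $L$ — is a routine assembly of Theorem 4.5 and Corollary 4.6 once countable tightness of $F\cup N$ is available.
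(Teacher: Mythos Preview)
Your handling of the structure theory (the sets $L$, $F$, $N$, Corollary~4.6, density of $L$ once $N$ has empty interior) is fine and matches the paper. The gap is in the step where CH enters.

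You try to consume CH by passing to a hereditarily Lindel\"of (or second countable) domain, invoking ``Balogh/Dow'' and Siwiec-type arguments. None of these do what you need. The Balogh result goes the other way (under PFA, compact plus countably tight implies sequential) and is irrelevant here since sequentiality is already assumed. Siwiec's theorem requires a $k_\omega$-decomposition into \emph{metrizable} compacta as a hypothesis, not a conclusion, and it treats Fr\'echet--Urysohn spaces, not merely sequential ones. Most importantly, there is no theorem saying that under CH a sequential $k_\omega$-space can be represented over a hereditarily Lindel\"of or second countable domain: the lexicographically ordered square, mentioned in the paper itself, is compact, first countable (hence sequential), yet neither separable nor hereditarily Lindel\"of, so no second countable domain can map onto it. Your ``key step'' therefore does not go through.

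The paper's argument is much shorter and uses a different CH input: Arkhangel'skii's 1970 theorem that, under CH, every (locally) compact Hausdorff sequential space is first countable at a dense set of points. Once you know $L_{q_*}$ is dense (which you have correctly extracted from Corollary~4.6(ii) and the hypothesis that $N_{q_*}$ has empty interior), you simply observe that $L_{q_*}$ is an open, locally compact, sequential subspace of $X$ and apply Arkhangel'skii's theorem to it directly. No new quotient representation is needed.
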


\bigskip

\begin{proof} The set $L_{q_*}$ of points with compact neighbourhoods is open and hence sequential.
Since $X$ is sequential it is countably tight, so it follows from Corollary 4.6(ii) that $N_{q_*}$ has empty interior
if and only $L_{q_*}$ is dense. If $X$ is first countable at $x\in X$ then $x\in L_{q_*}$, see \cite[p. 113]{Fr}. Thus
if $X$ is first countable at a dense subset of points, $N_{q_*}$ has empty interior. Conversely,
if $N_{q_*}$ has empty interior, then $L_{q_*}$ is dense in $X$, and
Arkhangelski showed in 1970 (\cite{Arhan}; see also \cite[p. 381]{Ar})  that under (CH) every (locally) compact Hausdorff sequential space
is first countable at a dense subset of points.
\end{proof}

\bigskip

\noindent In particular, under (CH), every Fr\'echet-Urysohn $k_{\omega}$-space is first countable at a dense subset of points. On the other
hand, in 1987 Malyhin used the method of forcing to exhibit a compact Hausdorff Fr\'echet-Urysohn which is nowhere first countable \cite{Maly}.
Thus Corollary 5.3 may be about as far as one can go in ZFC in answer to Arkhangel'skii's question.

\bigskip
\noindent {\bf Acknowledgements:} We are grateful to Eric Wofsey and Henno Brandsma for help on Math Stack Exchange
at early stages of this enquiry.

\end{document}